\numberwithin{equation}{section}
\newtheorem{Theorem}{Theorem}[section]
\newtheorem*{Theorem*}{Theorem}
\newtheorem{Corollary}[Theorem]{Corollary}
\newtheorem{Lemma}[Theorem]{Lemma}
\newtheorem{Proposition}[Theorem]{Proposition}
 { \theoremstyle{definition}
\newtheorem{Definition}[Theorem]{Definition}

\newtheorem{Example}[Theorem]{Example}
\newtheorem{Remark}[Theorem]{Remark} }
\DeclareMathOperator{\End}{End}
\DeclareMathOperator{\im}{im}
\DeclareMathOperator{\Hom}{Hom}
\DeclareMathOperator{\Tr}{Tr}
\newcommand{\fg}{\mathfrak{g}}
\newcommand{\fh}{\mathfrak{h}}
\newcommand{\fl}{\mathfrak{l}}
\newcommand{\fo}{\mathfrak{o}}
\newcommand{\fs}{\mathfrak{s}}
\newcommand{\fD}{\mathfrak{D}}
\newcommand{\bbc}{\mathbb{C}}
\newcommand{\bbr}{\mathbb{R}}
\newcommand{\bbt}{\mathbb{T}}
\newcommand{\bbz}{\mathbb{Z}}
\newcommand{\Cc}{\mathcal{C}}
\newcommand{\Cu}{\mathcal{U}}
\newcommand{\Cw}{\mathcal{W}}
\newcommand{\bx}{\mathbf{x}}
\newcommand{\by}{\mathbf{y}}
\newcommand{\bM}{\mathbf{M}}
\newcommand{\ux}{\underline{x}}
\newcommand{\uD}{\underline{D}}
\newcommand{\uM}{\underline{M}}
\newcommand{\lpi}{\langle}
\newcommand{\rpi}{\rangle}
\newcommand{\Pin}{\mathsf{Pin}}
\DeclareMathOperator{\rca}{\mathsf{H}}
\DeclareMathOperator{\ama}{\mathsf{A}}
\begin{document}
\allowdisplaybreaks

\newcommand{\arXivNumber}{2110.01353}

\renewcommand{\PaperNumber}{040}

\FirstPageHeading

\ShortArticleName{Dirac Operators for the Dunkl Angular Momentum Algebra}

\ArticleName{Dirac Operators for the Dunkl Angular Momentum\\ Algebra}

\Author{Kieran CALVERT~$^{\rm a}$ and Marcelo DE MARTINO~$^{\rm b}$}

\AuthorNameForHeading{K.~Calvert and M.~De Martino}

\Address{$^{\rm a)}$~Department of Mathematics, University of Manchester, UK}
\EmailD{\href{mailto:kieran.calvert@manchester.ac.uk}{kieran.calvert@manchester.ac.uk}}

\Address{$^{\rm b)}$~Department of Electronics and Information Systems, University of Ghent, Belgium}
\EmailD{\href{mailto:marcelo.goncalvesdemartino@ugent.be}{marcelo.goncalvesdemartino@ugent.be}}

\ArticleDates{Received November 10, 2021, in final form May 24, 2022; Published online June 01, 2022}

\Abstract{We define a family of Dirac operators for the Dunkl angular momentum algebra depending on certain central elements of the group algebra of the Pin cover of the Weyl group inherent to the rational Cherednik algebra. We prove an analogue of Vogan's conjecture for this family of operators and use this to show that the Dirac cohomology, when non-zero, determines the central character of representations of the angular momentum algebra. Furthermore, interpreting this algebra in the framework of (deformed) Howe dualities, we show that the natural Dirac element we define yields, up to scalars, a square root of the angular part of the Calogero--Moser Hamiltonian.}

\Keywords{Dirac operators; Calogero--Moser angular momentum; rational Cherednik algebras}

\Classification{16S37; 17B99; 20F55; 81R12}

\section{Introduction}
Let $(E,B)$ be a Euclidean space and consider the action of partial differential operators with polynomial coefficients in the space of polynomial functions $\bbc[E]$. This framework is very fruitful and yields many applications most importantly to physics. Angular momentum, for instance, is a fundamental property of particle dynamics and the quantum angular momentum operators are realized within this setup. We consider the situation in which the partial differential operators are deformed to differential-difference operators, the so-called Dunkl operators. For this, we also need a real reflection group~$W$ inside the orthogonal group $\mathsf{O}(E,B)$ and a parameter function~$c$ on the conjugacy classes of reflections of~$W$. Together, the pair~($W,c)$, the Dunkl operators and the multiplication operators generate the so-called rational Cherednik algebra (see Definition~\ref{d:RCA}) inside the endomorphism space of the polynomial ring~$\mathbb{C}[E]$.

The subalgebra of the Cherednik algebra generated by $W$ and the Dunkl angular momentum operators is called the Dunkl angular momentum algebra (see Definition~\ref{d:AMA}). In \cite{FH15}, Feigin and Hakobyan obtained important structural results about this algebra. In particular they obtained all the defining relations and showed that its centre is, essentially, a univariate polynomial ring on the angular part of the Calogero--Moser Hamiltonian (see also \cite[Remark~3.3]{FH19}). Later in \cite{CDM20}, it was shown that this algebra naturally arises in the context of deformed Howe dualities as the centralizer algebra of the Dunkl--Cherednik version of the polynomial $\mathfrak{sl}(2)$-triple obtained from the Laplacian and the norm-squared operator. It is then clear that the angular Calogero--Moser Hamiltonian is, up to scalars, the Casimir operator of $\mathfrak{sl}(2)$ (see Remark~\ref{r:AngHamiltonian}, below).

In this paper, inspired by the successful theory of Dirac operators for Lie theory \cite{AS77,HP02,K03,P72,V97}
and Drinfeld algebras \cite{BCT12,C20,C16,CDM18, COT14}, we propose to define a theory of Dirac operators for the Dunkl angular momentum algebra. In slightly more details, we work with the Clifford algebra associated to $(E,B)$ and we define the Dirac element~$\mathcal{D}$ inside the tensor product of the angular momentum algebra and the Clifford algebra. We then show that this element is invariant for~$\tilde{W}$, the Pin-cover of the Weyl group $W$, and that by a suitable modification~$\phi$ (see Definition~\ref{e:CDirac}), akin to the one made by Kostant~\cite{K03} in the context of cubic Dirac operators, the element $\mathfrak{D}_0 = \mathcal{D} - \phi$ is essentially a square-root of the Casimir of $\mathfrak{sl}(2)$ (see Corollary~\ref{c:DiracSquare}). Furthermore, we introduce a family of Dirac operators $D_C$ depending on certain central elements $C$ of $\bbc \tilde W$ (see Definition~\ref{d:Dirac_C}) with respect to which we prove an analogue of Vogan's conjecture (see Theorem~\ref{t:Vogan}) and, using the celebrated notion of Dirac cohomology (see Definition~\ref{d:DiracCoh}), we show that the Dirac cohomology, when non-zero, determines the central character of representations of the angular momentum algebra (see Theorem~\ref{t:CentChar}). We expect that such results can aid in a systematic study of the representation theory of the angular momentum algebra, since its representation theory, just like for the rational Cherednik algebra, is highly dependant on the parameter function $c$.

Finally, we give a break-down of the contents of the paper. In Section \ref{s:Prelims}, we recall the definition of the rational Cherednik algebra, introduce the angular momentum algebra and obtain a linear relation between the Casimir of $\mathfrak{sl}(2)$ and the angular Calogero--Moser Hamiltonian. Next, in Section~\ref{s:Cliff} we recall the definitions of the Clifford algebra, the Pin-cover of the Weyl group and we introduce the Dirac elements of the angular momentum algebra. The highlight of this section is the computation of the square. Afterwards, in Section \ref{s:SCasimir} we relate our Dirac element $\mathcal{D}$ with the SCasimir of the closely related algebra $\mathfrak{osp}(1|2)$ (see De Bie et al.~\cite{DOV18a,DOV18b} for the explicit realization) while in Section~\ref{s:Cohomology}, we prove the main results on Vogan's conjecture and Dirac cohomology. In the last section, we describe and study a non-trivial example of an admissible central element that yields a Dirac operator and relate such element with the Dirac operator obtained by~\cite{BCT12}, in the context of a graded affine Hecke algebra. We also discuss the set of admssible elements in the case when $W=S_n$.

\section{Preliminaries}\label{s:Prelims}

Let $(E,B)$ be a Euclidean space affording the reflection representation of a finite reflection group $W\subset \mathsf{O}(E,B)$. Put $n = \dim(E)$. Let $R\subseteq E^*$ denote the root system of $W$ and $R^\vee\subseteq E$ its dual root system normalized by the condition $\langle \alpha,\alpha^\vee \rangle = 2$, for all $\alpha$ in $R$, where $\langle -,- \rangle\colon E^*\times E \to \bbr$ denotes the natural pairing. We shall identify $E$ and $E^*$ isometrically using the Euclidean structure $B$ and we denote by $B^*$ the inherited Euclidean structure on $E^*$. This identification $B\colon E \to E^*$ is defined by $\langle B(y),\eta \rangle = B(y,\eta)$ for all $y,\eta\in E$.
\begin{Remark}
Under the isometry $B\colon E \to E^*$, we have
$\alpha = 2B(\alpha^\vee)|\alpha^\vee|^{-2}$ and $2=|\alpha||\alpha^\vee|$. Further, if $\{y_1,\dots,y_n\}\subset E$ is an orthornormal basis then $\{x_1,\dots,x_n\}\subset E^*$ is an orthonormal basis, where $x_i = B(y_i)$ for all $i$, and the pairings are related via
\begin{equation}\label{e:pairings}
\langle x_i,\alpha^\vee \rangle = B(y_i,\alpha^\vee)
= \frac{2}{|\alpha|^2} B^*(x_i,\alpha)
= \frac{2}{|\alpha|^2}\langle \alpha,y_i \rangle
= \frac{|\alpha^\vee|^2}{2}\langle \alpha,y_i \rangle,
\end{equation}
for all $1\leq i \leq n$.
\end{Remark}
Fix, once and for all, a positive system $R_+\subseteq R$ and let $c\colon R_+\to \bbc$ be a parameter function, that is, an assignment $\alpha \mapsto c_\alpha \in \bbc$ such that $c_\alpha = c_{w\alpha}$ for all $w \in W$. Let $\Delta$ be the simple roots determined by $R_+$. Denote by $\fh = E_\bbc$ and $\fh^* = E^*_\bbc$. For any $\alpha \in R$, the element $s_\alpha$ is the reflection in $W$ acting by
\[ s_\alpha(y) = y - \langle \alpha , y \rangle \alpha^\vee,
\]
for all $y \in E$.

\begin{Definition}[\cite{EG02}] \label{d:RCA}
The {\it rational Cherednik algebra} $\rca = \rca(\fh,W,c)$ is the quotient of the smash product algebra $\bbt(\fh^*\oplus\fh)\#W$ modulo the relations $[x,x']=0=[y,y']$ and
\begin{equation*}
[y,x] = \langle x,y \rangle + \sum_{\alpha>0} c_\alpha \langle \alpha,y\rangle \langle x, \alpha^\vee \rangle s_\alpha,
\end{equation*}
for all $y,y'\in \fh$ and $x,x' \in \fh^*$.
\end{Definition}

\begin{Remark}
More generally, rational Cherednik algebras are defined with respect to finite complex reflection groups inside the unitary group with respect to the Hermitian extension of~$B$. However, for the existence of the $\mathfrak{sl}(2)$-triple and the Duality Theorem stated below, it is fundamental that $W$ is a real reflection group.
\end{Remark}

Fix an orthonormal basis $\{y_1,\dots, y_n\}\subset E$ and let $\{x_1,\dots,x_n\}\subset E^*$ be the dual basis, i.e., with $x_i = B(y_i)$ for all $i$. Consider the vector notation $\bx:=(x_1,\dots,x_n)$ and $\by := (y_1,\dots,y_n)$ with the usual dot product of vectors. As customary, we shall write $\bx^2$ for $\bx\cdot\bx$ and similarly for~$\by^2$. It is well-known (see \cite{He91}) that the elements $H := \tfrac{1}{2}(\bx\cdot\by + \by\cdot\bx), X:= -\tfrac{1}{2}\bx^2$ and $Y:=\tfrac{1}{2} \by^2$ of $\rca$ satisfy the $\fs\fl(2)$-commutation relations and span a copy of $\fs\fl(2,\bbc)$ inside $\rca$. On the other hand, consider the Dunkl angular momentum elements $M_{ij} := x_iy_j - x_jy_i$ of $\rca$ for $1\leq i,j\leq n$. Note that they span a vector space isomorphic to $\wedge^2(\fh)$. For each pair $(i,j)$ with $1\leq i, j\leq n$ define
\[
S_{ij} := [y_i,x_j] = \delta_{ij} + \sum_{\alpha >0} c_\alpha \langle \alpha,y_j \rangle \langle x_i, \alpha^\vee \rangle s_\alpha\in\bbc W
\]
and let $Z := \sum_{\alpha>0} c_\alpha s_\alpha$. Note that $Z$ is in the centre of $\bbc W$ since the parameter function $c$ is uniform on conjugacy classes of reflections. Since $W$ is a real reflection group, we get $S_{ij}=S_{ji}$, for all $i$, $j$.

\begin{Lemma}
We have
\begin{equation}\label{e:commutatorsum}
 \sum_i S_{ii} = \sum_i [y_i,x_i] = n + 2Z.
\end{equation}
\end{Lemma}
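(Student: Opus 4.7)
The plan is to compute $\sum_i S_{ii}$ by directly substituting the defining expression for $S_{ii}$ and then reducing the resulting inner sum over the orthonormal basis to a constant independent of $i$, using the pairing identity \eqref{e:pairings}.

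First, I would write out
\[
\sum_{i=1}^n S_{ii} \;=\; \sum_{i=1}^n \delta_{ii} \;+\; \sum_{i=1}^n \sum_{\alpha>0} c_\alpha \langle \alpha,y_i\rangle \langle x_i,\alpha^\vee\rangle\, s_\alpha \;=\; n \;+\; \sum_{\alpha>0} c_\alpha s_\alpha \left(\sum_{i=1}^n \langle \alpha, y_i\rangle \langle x_i,\alpha^\vee\rangle\right).
\]
So the identity will follow once I show that the inner bracketed sum equals $2$ for every $\alpha \in R_+$.

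Next, I apply the pairing identity \eqref{e:pairings}, namely $\langle x_i,\alpha^\vee\rangle = \tfrac{|\alpha^\vee|^2}{2}\langle \alpha,y_i\rangle$, to rewrite
\[
\sum_{i=1}^n \langle \alpha,y_i\rangle\langle x_i,\alpha^\vee\rangle \;=\; \frac{|\alpha^\vee|^2}{2}\sum_{i=1}^n \langle \alpha,y_i\rangle^2 \;=\; \frac{|\alpha^\vee|^2}{2}\, |\alpha|^2,
\]
where in the last step I used that $\{y_1,\dots,y_n\}$ is orthonormal so that $\sum_i \langle \alpha,y_i\rangle^2 = B^*(\alpha,\alpha) = |\alpha|^2$. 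Finally, the normalization $|\alpha||\alpha^\vee|=2$ recorded in the preceding remark gives $|\alpha|^2|\alpha^\vee|^2 = 4$, so the bracket equals $2$, and substituting back yields $\sum_i S_{ii} = n + 2\sum_{\alpha>0} c_\alpha s_\alpha = n + 2Z$, as required.

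There is essentially no obstacle here: the lemma is a bookkeeping computation whose only subtle point is tracking the scaling factor $|\alpha^\vee|^2/2$ that arises from identifying $E$ with $E^*$ via $B$. The content of the lemma is thus that the two natural pairings on $E$ and $E^*$ conspire, together with the normalization $\langle \alpha,\alpha^\vee\rangle = 2$, to produce the clean factor of $2$ in front of $Z$.
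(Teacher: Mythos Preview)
Your proof is correct and follows essentially the same approach as the paper: substitute the definition of $S_{ii}$, sum over $i$, and verify that $\sum_i \langle \alpha,y_i\rangle\langle x_i,\alpha^\vee\rangle = 2$. The only cosmetic difference is that the paper observes this inner sum directly as $\langle \alpha,\alpha^\vee\rangle = 2$ via the dual-basis expansion, whereas you route through \eqref{e:pairings} and the normalization $|\alpha||\alpha^\vee|=2$; both are equivalent one-line justifications.
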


\begin{proof}
Using $S_{ii} = [y_i,x_i] = 1+\sum_{\alpha >0} c_\alpha \langle \alpha ,y_i\rangle \langle x_i,\alpha^\vee \rangle s_\alpha$ and the identity
$
\sum_i \langle \alpha,y_i \rangle \langle x_i,\alpha^\vee \rangle = \langle \alpha, \alpha^\vee\rangle =2,
$
the claim follows.
\end{proof}

\begin{Definition}[\cite{FH15}]\label{d:AMA}
Let $\{\uM_{ij}\mid 1\leq i<j\leq n\}$ be a vector space basis of $\wedge^2(\fh)$. The {\it Dunkl angular momentum algebra} $\ama(\fh,W,c)$ is the quotient of the smash product algebra $\bbt\big({\wedge}^2(\fh)\big)\#W$ modulo the commutation relations
\begin{equation}\label{e:agacomrel}
[\uM_{ij},\uM_{kl}] = \uM_{il}S_{jk} + \uM_{jk}S_{il}-\uM_{ik}S_{jl} - \uM_{jl}S_{ik}
\end{equation}
and the crossing-relations
\begin{equation}\label{e:agacrossrel}
\uM_{ij}\uM_{kl} + \uM_{jk}\uM_{il}+\uM_{ki}\uM_{jl} = \uM_{ij}S_{kl} + \uM_{jk}S_{il}+\uM_{ki}S_{jl}
\end{equation}
for all $1\leq i,j,k,l \leq n$.
Note that $\uM_{ii} = 0$ for any $i = 1,\dots, n$.
\end{Definition}

In what follows, we shall refer to this algebra only as the angular momentum algebra, or just AMA. The relevance of this subalgebra of $\rca$ is manifested by the following fact (see~\cite{FH15} and~\cite{CDM20}):

\begin{Theorem}\label{t:Centre}
The associative subalgebra $\ama$ of $\rca$ generated by the elements $\{M_{ij}\mid 1\leq i<j \leq n\}$ and $W$ is isomorphic to the angular momentum algebra $\ama(\fh,W,c)$. Furthermore, $\ama$ is the centralizer algebra in $\rca$ of the $\fs\fl(2)$-triple $(H,X,Y)$.
\end{Theorem}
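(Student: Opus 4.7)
The theorem has two halves: (a) the abstract algebra $\ama(\fh,W,c)$ defined by generators and relations \eqref{e:agacomrel}--\eqref{e:agacrossrel} is isomorphic to the concrete subalgebra of $\rca$ generated by the $M_{ij} = x_iy_j - x_jy_i$ and $W$, and (b) that subalgebra coincides with the centralizer of the $\fs\fl(2)$-triple $(H,X,Y)$. I would treat them separately.

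For (a), the plan is to define $\phi\colon \ama(\fh,W,c)\to \rca$ by $\uM_{ij}\mapsto M_{ij}$ and $w\mapsto w$, and verify that the defining relations hold among the images. For \eqref{e:agacomrel}, I would expand
\[
[M_{ij},M_{kl}] = [x_iy_j - x_jy_i,\, x_ky_l - x_ly_k]
\]
using $[ab,cd] = a[b,c]d + ac[b,d] + [a,c]bd + c[a,d]b$; since $[x,x']=[y,y']=0$, only four terms survive, each of the form $x_\bullet S_{\bullet\bullet} y_\bullet$ after invoking $[y_a,x_b]=S_{ab}$. To match the right-hand side $\uM_{il}S_{jk}+\uM_{jk}S_{il}-\uM_{ik}S_{jl}-\uM_{jl}S_{ik}$, one commutes each $S_{\bullet\bullet}$ past the adjacent $y$ using $[y_l,s_\alpha] = -\langle\alpha,y_l\rangle s_\alpha\alpha^\vee$; the resulting correction terms cancel pairwise by the symmetry $S_{ab}=S_{ba}$. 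A parallel expansion of the cubic sum $M_{ij}M_{kl}+M_{jk}M_{il}+M_{ki}M_{jl}$ yields the crossing relation \eqref{e:agacrossrel}: the degree-four polynomial parts cancel by the manifest cyclic antisymmetry and only the $S$-terms survive.

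This shows $\phi$ is a well-defined surjection onto the subalgebra $\ama$. Injectivity, i.e.\ that no further relations exist, is the genuinely hard part and is precisely the PBW-type content of \cite{FH15}: Feigin and Hakobyan establish a monomial basis of $\ama(\fh,W,c)$ matching the size of the subalgebra generated inside $\rca$, so \eqref{e:agacomrel}--\eqref{e:agacrossrel} is a complete presentation.

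For (b), the forward inclusion $\ama\subseteq \Cent_{\rca}(\fs\fl(2))$ is checked on generators: $W$ commutes with $H$, $X$, $Y$ because each of these is a $W$-invariant polynomial in $\bx,\by$; and $[M_{ij},H]=0$ holds since $H$ is the Euler element (grading $x_k$ by $-1$ and $y_k$ by $+1$), giving $M_{ij}$ total degree zero. The vanishings $[M_{ij},X]=[M_{ij},Y]=0$ are direct computations: for instance $[M_{ij},Y]=\tfrac{1}{2}\sum_k [x_iy_j-x_jy_i, y_k^2]$, and using $[x_a,y_k^2]=-S_{ak}y_k - y_kS_{ak}$ together with $S_{ab}=S_{ba}$, the antisymmetrization in $(i,j)$ and summation over $k$ produce a complete cancellation; the calculation for $X$ is symmetric.

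The reverse inclusion $\Cent_{\rca}(\fs\fl(2))\subseteq \ama$ is where the real work lies, and I expect this to be the main obstacle. The plan is to exploit that $\mathrm{ad}(\fs\fl(2))$ acts locally finitely and semisimply on $\rca$, so the centralizer is the sum of trivial isotypic components. Using the PBW decomposition $\rca\simeq\bbc[\fh^*]\otimes\bbc W\otimes\bbc[\fh]$ and the associated graded $\mathrm{gr}(\rca)\simeq\bbc[\fh\oplus\fh^*]\#W$, the $\mathrm{ad}(\fs\fl(2))$-action descends compatibly. Classical $\mathsf{O}(n)$-$\mathsf{Sp}(2)$ Howe duality applied $W$-equivariantly identifies the graded invariants as the subalgebra generated (over $\bbc W$) by the quadratic invariants, of which the antisymmetric ones are precisely the $M_{ij}$ and the symmetric ones generate the $\fs\fl(2)$-triple itself; requiring invariance separately under $H$ and $X$ eliminates the latter, leaving the $M_{ij}$'s as generators of the trivial isotypic component. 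Lifting from the associated graded to $\rca$ via a standard filtration induction, and invoking the identification in (a), gives the inclusion. This Howe-theoretic identification is established in \cite{CDM20}, which may be cited to conclude.
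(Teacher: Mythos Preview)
The paper does not actually prove Theorem~\ref{t:Centre}; it is stated as a known fact with the parenthetical ``(see~\cite{FH15} and~\cite{CDM20})'' and no proof is given. Your proposal is therefore not being compared against an in-paper argument but against those external references, and in that respect your sketch is accurate: part~(a) is precisely the content of \cite{FH15}, where the relations \eqref{e:agacomrel}--\eqref{e:agacrossrel} are verified inside $\rca$ and the PBW-type basis argument establishes that no further relations are needed; part~(b), the identification of $\ama$ with the $\fs\fl(2)$-centralizer via a deformed Howe duality and passage through the associated graded, is the main theorem of \cite{CDM20}. You have correctly located the division of labour and the substantive difficulties (injectivity in~(a), the reverse inclusion in~(b)).

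Two small corrections to your sketch. First, with the paper's conventions $H = \bx\cdot\by + \tfrac{n}{2}+Z$ satisfies $[H,x_k]=x_k$ and $[H,y_k]=-y_k$, so $x$ has degree $+1$ and $y$ degree $-1$, the reverse of what you wrote; this does not affect your conclusion that $M_{ij}$ has degree zero. Second, your phrasing of the Howe-duality step is slightly off: the symmetric quadratic invariants $\bx^2,\by^2,\bx\cdot\by$ span the $\fs\fl(2)$ itself, and they are \emph{not} in the centraliser (e.g.\ $[H,X]=2X$), so there is nothing to ``eliminate''; rather, the dual-pair statement is that the $\fs\fl(2)$-invariants in the Weyl algebra are exactly the image of $\Cu(\fs\fo(n))$, generated by the $M_{ij}$, and one then tensors with $\bbc W$ and lifts through the filtration. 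With these adjustments your outline matches what \cite{FH15} and \cite{CDM20} do.
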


Let $w_0$ be the longest element of $W$ with respect to the simple roots $\Delta$. Then, $-w_0$ acts on the root system $R$ and it is an automorphism of the associated Dynkin diagram.

\begin{Definition}
We will denote by $(-1)_\fh$ the element $-{\rm Id}\in\End(\fh)$.
\end{Definition}

\begin{Remark}
If we have $w_0=(-1)_\fh$, then $w_0$ is in the centre of $W$ and acts on $\mathfrak{h}$ and $\mathfrak{h}^*$
 by~$-1$ and hence trivially on $\wedge^2\mathfrak{h}$.
\end{Remark}

\begin{Lemma}\label{r:schurorth}
The only elements of $W$ which act trivially on $\wedge^2\mathfrak{h}$ are, respectively, $1_W$ and $(-1)_\fh$ if $w_0 = (-1)_\fh$, and only $1_W$ if $w_0\neq (-1)_\fh$.
\end{Lemma}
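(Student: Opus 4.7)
The plan is to analyse the eigenvalues of an arbitrary $w\in W$ acting on $\fh=E_\bbc$. Since $W$ is finite, $w$ has finite order and is therefore diagonalisable over $\bbc$, with eigenvalues $\lambda_1,\dots,\lambda_n$ that are roots of unity. The induced action on $\wedge^2\fh$ is then also diagonal, with eigenvalues $\lambda_i\lambda_j$ for $1\leq i<j\leq n$. Hence $w$ acts trivially on $\wedge^2\fh$ if and only if $\lambda_i\lambda_j=1$ for every pair $i\neq j$.

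I would then extract the consequences of this system of relations, assuming $n\geq 3$ (the low-rank cases are easily verified by direct inspection). From $\lambda_1\lambda_2=\lambda_1\lambda_3=1$ one obtains $\lambda_2=\lambda_3$, and iterating the same argument across all triples containing the index $1$ shows that $\lambda_2,\dots,\lambda_n$ all coincide with some common value $\mu$, while $\lambda_1=\mu^{-1}$. The remaining relation $\lambda_2\lambda_3=\mu^2=1$ then forces $\mu=\pm 1$, so $\lambda_1=\mu$ as well; by diagonalisability, $w=\mu\cdot\Id_\fh\in\{1_W,-\Id_\fh\}$.

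The final step is the standard observation that $-\Id_\fh$ belongs to $W$ precisely when $w_0=-\Id_\fh$. Indeed, if $-\Id_\fh\in W$ then it sends $R_+$ to $-R_+$, and in a finite real reflection group the longest element is uniquely characterised by this length-inverting property; the converse is tautological. Combining this with the previous step yields exactly the dichotomy stated in the lemma.

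The only delicate point is the eigenvalue bookkeeping in the middle step, where one must notice that the constraints $\lambda_i\lambda_j=1$ collapse the spectrum of $w$ to a single value of order at most two; once this is in place the group-theoretic step involving $w_0$ is routine.
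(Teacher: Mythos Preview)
Your argument is correct for $n\geq 3$ and takes a genuinely different route from the paper. The paper gives essentially a one-line proof via Schur's lemma: identifying $\wedge^2\fh$ with the Lie algebra $\fs\fo(n)$, on which $\mathsf{O}(E,B)$ acts by conjugation, an element $g$ acts trivially on $\wedge^2\fh$ exactly when it commutes with all of $\fs\fo(n)$ in the defining representation on $\fh$; irreducibility of that representation then forces $g$ to be a scalar, hence $g=\pm\Id$. You instead diagonalise $w$ and solve the combinatorial system $\lambda_i\lambda_j=1$ directly. Your method is more elementary---it needs nothing beyond linear algebra and the standard characterisation of $w_0$---while the paper's is slicker but presupposes the $\mathsf{O}(n)$-equivariant identification $\wedge^2\fh\cong\fs\fo(n)$ and irreducibility of the standard module.

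One caution: your dismissal of the low-rank cases as ``easily verified by direct inspection'' is too quick. For $n=2$ the space $\wedge^2\fh$ is one-dimensional and $w$ acts on it by $\det(w)$, so \emph{every} rotation in a dihedral group $I_2(m)$ acts trivially; for $m\geq 3$ this contradicts the lemma as stated. The paper's Schur argument has exactly the same gap here (the defining representation of $\mathsf{SO}(2)$ on $\bbc^2$ is not irreducible), so this is a defect of the statement rather than of your method---but you should not claim to have checked a case that in fact fails.
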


\begin{proof} Note that, by Schur's lemma, the only elements of the orthogonal group acting trivially on
$\wedge^2\mathfrak{h}$ are $\pm {\rm Id}$. The statement follows from this observation.
\end{proof}

Since $(H,X,Y)$ span a Lie algebra isomorphic to $\fs\fl(2,\bbc)$, the associative algebra subalgebra of $\rca$ generated by this triple contains the quadratic Casimir element $\Omega_{\fs\fl(2)} := H^2 +2(XY+YX)$. The centre of $\ama$ is given in terms of $\Omega_{\fs\fl(2)}$ and, possibly, $(-1)_\fh$.

\begin{Lemma}
When $c=0$ and $W$ is the trivial group, the center of $\ama$ is the univariate polynomial ring $\bbc[\Omega_{\fs\fl(2)}]$.
\end{Lemma}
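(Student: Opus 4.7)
My plan is to reduce the statement to classical Howe duality for the reductive dual pair $(\fs\fl(2,\bbc),\mathfrak{o}(n,\bbc))$ inside the Weyl algebra. When $c=0$ and $W$ is trivial, the rational Cherednik algebra $\rca$ is the Weyl algebra $\mathcal{W}_n$ on $E$, and by Theorem~\ref{t:Centre} the subalgebra $\ama$ coincides with the centralizer in $\mathcal{W}_n$ of the $\fs\fl(2)$-triple $(H,X,Y)$. Under the realization $M_{ij}=x_iy_j-x_jy_i$, this centralizer is in turn the image of $U(\mathfrak{o}(n))$ in $\mathcal{W}_n$, which is one half of Howe duality.

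The inclusion $\bbc[\Omega_{\fs\fl(2)}]\subseteq Z(\ama)$ is immediate: $\Omega_{\fs\fl(2)}$ lies in $U(\fs\fl(2))\subset \mathcal{W}_n$, so it commutes with the triple $(H,X,Y)$ and belongs to $\ama$; conversely, every element of $\ama$ commutes with $(H,X,Y)$ and hence with any polynomial expression in them. The subring $\bbc[\Omega_{\fs\fl(2)}]$ is a free polynomial ring because $\Omega_{\fs\fl(2)}$ has infinite spectrum on $\bbc[E]$: on the harmonic subspace of degree $k$ it acts by $(k+n/2)(k+n/2-2)$ for every $k\geq 0$.

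For the reverse inclusion I plan to use the other half of Howe duality, namely that the images of $U(\fs\fl(2))$ and $U(\mathfrak{o}(n))$ in $\mathcal{W}_n$ are mutual centralizers. If $z\in Z(\ama)$ then $z$ commutes with every $M_{ij}$, so by Howe duality $z$ lies in the image of $U(\fs\fl(2))$. But $z\in Z(\ama)\subseteq\ama$ also commutes with $(H,X,Y)$, hence $z$ must lie in the image of $Z(U(\fs\fl(2)))=\bbc[\Omega_{\fs\fl(2)}]$. Combining the two inclusions yields the desired equality.

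The step I expect to be most delicate is the invocation of Howe duality. For $n\geq 3$ the double commutant property of the pair $(\fs\fl(2),\mathfrak{o}(n))$ inside $\mathcal{W}_n$ is classical; low-rank cases need separate treatment (for $n=2$ in particular $\mathfrak{o}(2)$ is abelian, the harmonic isotypes split further, and the centralizer of $\ama$ in $\mathcal{W}_n$ is strictly larger than the image of $U(\fs\fl(2))$). A self-contained alternative I would keep in reserve is to pass to the associated graded with respect to the Bernstein filtration on $\mathcal{W}_n$: one shows $\operatorname{gr}(Z(\ama))\subseteq \bbc[E\oplus E^*]^{\fs\fl(2)\times\mathfrak{o}(n)}$, classical invariant theory identifies this latter ring with the polynomial algebra generated by the symbol of $\Omega_{\fs\fl(2)}$, and an induction on filtration degree lifts the identification back to $\ama$ itself.
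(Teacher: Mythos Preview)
Your proposal is correct and follows essentially the same route as the paper: both identify $\ama$ (for $c=0$, trivial $W$) with the image of $U(\fo(n))$ in the Weyl algebra and then invoke classical invariant theory for the dual pair $(O(n),\fs\fl(2))$ to place any central element inside the subalgebra generated by the $\fs\fl(2)$-triple, hence in $\bbc[\Omega_{\fs\fl(2)}]$. The one step the paper makes explicit that you absorb into the Howe-duality black box is the upgrade from $SO(n)$-invariance (which is all that commuting with the $M_{ij}$ gives) to full $O(n)$-invariance, argued via $U(\fg)^{G_0}=U(\fg)^{G}$; your remarks on the low-rank cases and the associated-graded alternative are not in the paper's proof.
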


\begin{proof}
This statement is a consequence of classical invariant theory (see \cite{H89} and \cite{W46}), but we provide the argument for completeness. In the present situation, $\ama$ is a subalgebra of the Weyl algebra $\Cw$ acting in the space of polynomial functions $\bbc[E]$. Furthermore, if we denote by $G$ the orthogonal group, $G_0$ its identity component (the special orthogonal group), $\fg=\textup{Lie}(G)$ and $\Cu(\fg)$ the universal enveloping algebra, then there is a $G$-equivariant homomorphism $\varphi\colon \Cu(\fg)\to \Cw$, whose image coincides with $\ama$. By equivariance, it follows that $\varphi$ maps $\Cu(\fg)^G\to\Cw^G$.

All that said,
if $Z$ is in the center of $\ama$, then $Z$ commutes with every generator $M_{ij}$ of $\ama$ and hence $Z\in \Cw^{G_0}$. Further, as $Z$ is also in the image of $\varphi$, it follows that $Z = \varphi(\tilde{Z})$ for some $\tilde{Z} \in \Cu(\fg)^{G_0} = \Cu(\fg)^{G}$, which then implies that $Z\in \Cw^G$. By classical invariant theory, $Z$ is thus in the associative subalgebra of $\Cw$ generated by the $\fs\fl(2)$-triple $(H,X,Y)$, from which the statement follows.
\end{proof}

\begin{Theorem}
The centre of $\ama$ is equal to the polynomial ring $\mathcal{R}[\Omega_{\fs\fl(2)}]$ on the Casimir with coefficients $\mathcal{R}= \mathbb{C}[(-1)_\fh]$, if $w_0=(-1)_\fh$, or $\mathcal{R}=\mathbb{C}$ otherwise.
\end{Theorem}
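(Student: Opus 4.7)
The plan is to prove the two inclusions $\mathcal{R}[\Omega_{\fs\fl(2)}]\subseteq Z(\ama)$ and $Z(\ama)\subseteq\mathcal{R}[\Omega_{\fs\fl(2)}]$ separately. The forward inclusion is direct: every $M_{ij}\in\ama$ centralises the $\fs\fl(2)$-triple $(H,X,Y)$ by Theorem~\ref{t:Centre}, hence commutes with $\Omega_{\fs\fl(2)}$, and $\Omega_{\fs\fl(2)}$ is $W$-invariant because the triple itself is $W$-invariant (being built from $\bx\cdot\by$, $\bx^2$ and $\by^2$); thus $\Omega_{\fs\fl(2)}\in Z(\ama)$. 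When $w_0=(-1)_\fh$ belongs to $W$, it is central in $\bbc W$ and by Lemma~\ref{r:schurorth} acts trivially on $\wedge^2\fh$, so it commutes with every generator $M_{ij}$ and lies in $Z(\ama)$. Hence $\mathcal{R}[\Omega_{\fs\fl(2)}]\subseteq Z(\ama)$.

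For the reverse inclusion, the strategy is a double-commutant computation inside $\rca$. Since $\ama=C_{\rca}(H,X,Y)$ by Theorem~\ref{t:Centre}, one has $\Cu(\fs\fl(2))\subseteq C_{\rca}(\ama)$, and because $(-1)_\fh\in Z(\ama)$ when present, also $\mathcal{R}\subseteq C_{\rca}(\ama)$, so $\mathcal{R}\cdot\Cu(\fs\fl(2))\subseteq C_{\rca}(\ama)$. The next step would be to establish the sharp equality
\[
C_{\rca}(\ama)=\mathcal{R}\cdot\Cu(\fs\fl(2)).
\]
Given this, any $z\in Z(\ama)=\ama\cap C_{\rca}(\ama)$ can be written $z=\sum_{i}u_{i}r_{i}$ with $u_i\in\Cu(\fs\fl(2))$ and $r_i\in\mathcal{R}$; because $z\in\ama$ centralises $\fs\fl(2)$ and each $r_i$ also commutes with $\fs\fl(2)$, each $u_i$ must lie in $Z(\Cu(\fs\fl(2)))=\bbc[\Omega_{\fs\fl(2)}]$, giving $z\in\mathcal{R}[\Omega_{\fs\fl(2)}]$.

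To prove the commutant identity, the plan is to use the faithful polynomial representation $\rca\hookrightarrow\End(\bbc[\fh^*])$ together with the deformed Howe-type bimodule decomposition $\bbc[\fh^*]=\bigoplus_{\sigma}V_\sigma\otimes H_\sigma$ under $\Cu(\fs\fl(2))\otimes\ama$ from \cite{CDM20}. An operator commuting with $\ama$ must act as $\psi_\sigma\otimes\Id_{H_\sigma}$ on each summand; the action on the $V_\sigma$-side is accounted for by $\Cu(\fs\fl(2))$, while the only group-algebra elements acting as scalars on every $H_\sigma$ are, by Lemma~\ref{r:schurorth} applied to $\wedge^2\fh\subseteq\End(H_\sigma)$, the elements $1$ and (when it occurs) $(-1)_\fh$, producing exactly the coefficient ring $\mathcal{R}$.

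The main obstacle is the commutant identification at arbitrary parameter $c$, since at non-generic $c$ the $\ama$-factors $H_\sigma$ need not be irreducible and the bimodule decomposition is more subtle. The base case $c=0$, $W=\{1\}$ is the content of the previous lemma; the extension to $c=0$ with arbitrary $W$ should follow from a smash-product computation $\ama(0)=\ama(0,\{1\})\#W$ combined with Lemma~\ref{r:schurorth} to pin down which $W$-elements can contribute to the centre; and the transfer to general $c$ would be carried out by a filtration/flatness argument based on the assignment $\deg(M_{ij})=1$, $\deg(w)=0$, under which the associated graded of $\ama(c)$ is independent of $c$ and thus transports the centre identification across the family.
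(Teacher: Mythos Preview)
Your forward inclusion is correct. For the reverse inclusion, you route through the full double commutant, aiming for $C_{\rca}(\ama)=\mathcal{R}\cdot\Cu(\fs\fl(2))$ and then intersecting with $\ama$. This intermediate identity is strictly stronger than what is needed and, as you yourself acknowledge, is not available at non-generic $c$: the Howe-type decomposition of $\bbc[\fh^*]$ from \cite{CDM20} does not force an $\ama$-intertwiner to be scalar on each $H_\sigma$ once those pieces fail to be irreducible. Your invocation of Lemma~\ref{r:schurorth} here is also misplaced: that lemma identifies $W$-elements acting trivially on $\wedge^2\fh$, not scalarly on the harmonic spaces $H_\sigma$. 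So the commutant step is a genuine gap.

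The paper avoids this detour. It applies a filtration argument directly to an element $F\in Z(\ama)$, using the PBW filtration inherited from $\rca$ (so $\deg x_i=\deg y_i=1$, $\deg w=0$, hence $\deg M_{ij}=2$). Writing the top-degree part as $F_0=\sum_w p_w\,w$, Lemma~\ref{r:schurorth} kills all $p_w$ with $w\notin\{1_W,(-1)_\fh\}$: if such a $w$ survived, there would be an $M_{ij}$ with $w(M_{ij})\neq M_{ij}$, forcing $[F,M_{ij}]$ to have terms of degree exceeding $\deg F$. The remaining top-degree commutators $[p_1,M_{ij}]$ and $[p_{-1},M_{ij}]$ coincide with their classical ($c{=}0$) counterparts, so the preceding Lemma on the classical centre forces $p_1,p_{-1}\in\bbc[\Omega_{\fs\fl(2)}]$ modulo lower order; subtract and induct on degree. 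Your final paragraph is close to this in spirit, but you frame it as proving the harder commutant identity rather than working directly on $Z(\ama)$. Note also that your filtration $\deg M_{ij}=1$ yields a \emph{commutative} associated graded (both the commutation and crossing relations drop to Pl\"ucker-type identities in degree~$2$), not the $c{=}0$ angular momentum algebra, so the reduction to the previous Lemma does not go through with that grading; the paper's use of the ambient $\rca$-filtration is what makes the comparison with the classical Weyl-algebra centre possible.
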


\begin{proof}
 The proof can be split in two cases; either $w_0=(-1)_\fh$ or not. In the later, the proof is identical to \cite[Theorem~5]{FH15}. For the remainder of this proof we assume that $w_0=(-1)_\fh$. Since the element $(-1)_\fh$ is in the centre of~$W$ and acts by~$1$ on $\wedge^2 \mathfrak{h}$ it is in the centre of $\ama$. We are left to prove that the subalgebra generated by $\Omega_{\fs\fl(2)}$, $(-1)_\fh$ and the constants is the full centre~$Z(\ama)$.

 Let $F$ be an arbitrary element in $Z(\ama)$. With respect to the usual filtration of $\rca$ whose associated graded object gives the PBW isomorphism $\rca = \bbc[\fh]\otimes\bbc[\fh^*]\otimes\bbc W$, let $F_0$ be the highest degree component occurring in $F$, say, of degree $d$. We can write $F_0 = \sum_{w\in W}p_w w$, where $p_w$ is a homogeneous polymonomial on the basis $\{x_1,\dots,x_n,y_1,\dots,y_n\}$ of degree $d$ and $w\in W$. We claim $p_w=0$ unless $w=1_W$ or $w=(-1)_\fh$. Suppose not and let $w \in W\setminus\{1_W,w_0\}$. By Lemma \ref{r:schurorth}, $w$ does not act trivially on $\wedge^2{\fh}$ and hence there exists an $M_{ij}$ such that $w(M_{ij}) \neq M_{ij}$, thus $[F,M_{ij}]$ has terms of degree bigger than $d$. However, as $F$ is in the centre of $\ama$ we get $[F,M_{ij}]=0$. This contradiction proves that the only group elements occurring in $F_0$ are $1_W$ or~$(-1)_\fh$.

 Furthermore, the top degree elements in $[F_0,M_{ij}]$ agrees with the top degree elements of $[F,M_{ij}]$. Therefore, modulo lower order terms $F_0$ is in $Z(\ama)$. Write $F_0 = p_{-1}(-1)_\fh+ p_11_W$. We claim that $p_{1}$ and $p_{-1}$ are polynomials over $\mathbb{C}$ in the variable $\Omega_{\fs\fl(2)}$. Note that the top degree elements of $[p_{-1},M_{ij}]$ and $[p_{1},M_{ij}]$ agree with the classical commutators (at $c=0$). The classical center ($c=0$) is generated by $\Omega_{\fs\fl(2)}$ and we can write $p_{-1}$ and $p_{1}$ as the corresponding elements in the classical center, modulo lower degree terms. We have thus proved that, modulo lower degree terms, $F_0$ is in the algebra
 $\mathbb{C}[(-1)_\fh][\Omega_{\fs\fl(2)}]$ and $F-F_0$ has lower degree. By induction $F$ is in $\mathcal{R}[\Omega_{\fs\fl(2)}]$ and we are done.
\end{proof}

\begin{Remark}
The above result is not novel. In \cite{FH15}, it was shown that for $W=S_n$, the centre of~$\ama$ is equal to the univariate polynomial ring on the angular Calogero--Moser Hamiltonian, which coincide with $\Omega_{\mathfrak{sl}(2)}$, modulo lower degree terms (see Remark~\ref{r:AngHamiltonian}, below). In \cite[Remark~3.3]{FH19}, the above theorem was stated, without proof, for general $W$. We decided to present the argument here for completeness.
\end{Remark}

Now let $\bM^2 := \sum_{i<j} M_{ij}^2\in\ama$ be the Dunkl angular momentum square. In what comes next, we shall compute the precise relationship between $\bM^2$ and the Casimir $\Omega_{\fs\fl(2)}$. Recall the central element $Z = \sum_{\alpha>0} c_\alpha s_\alpha$ of $\bbc W$.

\begin{Proposition}
The Dunkl angular momentum square satisfy the identity
\[
\bM^2 = \bx^2\by^2 - (\bx\cdot\by)^2 - (\bx\cdot\by)(2Z + n - 2).
\]
\end{Proposition}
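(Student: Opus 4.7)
The plan is a direct computation exploiting the Cherednik relations $y_ax_b = x_by_a + S_{ba}$, $[x_a,x_b]=[y_a,y_b]=0$, and the identity \eqref{e:commutatorsum}. Since $M_{ii}=0$ and $M_{ji}=-M_{ij}$, one has $2\bM^2 = \sum_{i,j} M_{ij}^2$. Expanding each $M_{ij}^2 = x_iy_jx_iy_j - x_iy_jx_jy_i - x_jy_ix_iy_j + x_jy_ix_jy_i$ and relabelling $i\leftrightarrow j$ in the third and fourth terms reduces matters to
\[
\bM^2 = \sum_{i,j} x_iy_jx_iy_j - \sum_{i,j} x_iy_jx_jy_i.
\]
For the first sum, the single move $y_jx_i = x_iy_j + S_{ji}$ yields $\bx^2\by^2 + \sum_{i,j}x_iS_{ji}y_j$. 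For the second sum, I would first apply $y_jx_j = x_jy_j + S_{jj}$, then use $y_jy_i = y_iy_j$, and finally $x_jy_i = y_ix_j - S_{ij}$; invoking \eqref{e:commutatorsum} to write $\sum_jS_{jj}=n+2Z$ gives $(\bx\cdot\by)^2 - \sum_{i,j}x_iS_{ij}y_j + \sum_i x_i(n+2Z)y_i$. Since $S_{ij}=S_{ji}$, subtracting produces
\[
\bM^2 = \bx^2\by^2 - (\bx\cdot\by)^2 + 2\sum_{i,j} x_iS_{ij}y_j - \sum_i x_i(n+2Z)y_i.
\]

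The remaining task is to identify the correction with $-(\bx\cdot\by)(2Z + n - 2)$. Expanding $S_{ij}$ and using the basis identities $\sum_i\langle x_i,\alpha^\vee\rangle x_i = B(\alpha^\vee)$ and $\sum_j\langle\alpha,y_j\rangle y_j = B^{-1}(\alpha)$, whose scalar factors cancel by \eqref{e:pairings}, gives $\sum_{i,j}x_iS_{ij}y_j = \bx\cdot\by + \sum_{\alpha>0}c_\alpha\,\alpha s_\alpha\alpha^\vee$. The smash-product rule $s_\alpha\alpha^\vee = -\alpha^\vee s_\alpha$ rewrites $\alpha s_\alpha\alpha^\vee = -\alpha\alpha^\vee s_\alpha$. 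Separately, $\sum_i x_iZy_i = (\bx\cdot\by)Z + \sum_i x_i[Z,y_i]$, and the identity $[s_\alpha,y_i] = -\langle\alpha,y_i\rangle\alpha^\vee s_\alpha$ forces $\sum_i x_i[Z,y_i] = -\sum_{\alpha>0}c_\alpha\,\alpha\alpha^\vee s_\alpha$. Substituting these into $2\sum_{i,j}x_iS_{ij}y_j - \sum_i x_i(n+2Z)y_i$, the $s_\alpha$-weighted contributions cancel exactly, and the scalar remainder equals $(2-n)(\bx\cdot\by) - 2(\bx\cdot\by)Z = -(\bx\cdot\by)(2Z+n-2)$.

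The main obstacle is the bookkeeping of the Dunkl correction: the term $\sum_{\alpha>0}c_\alpha\,\alpha s_\alpha\alpha^\vee$ produced by the non-scalar part of $S_{ij}$ looks foreign to the target expression, and the crux is recognising that, after the move $s_\alpha\alpha^\vee = -\alpha^\vee s_\alpha$, it is precisely cancelled by the analogous term arising from commuting $Z$ past the $y_i$'s inside $\sum_i x_i Zy_i$. Once this cancellation is spotted, the rest is routine manipulation with PBW relations.
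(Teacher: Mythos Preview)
Your proof is correct and follows essentially the same route as the paper's: both are direct PBW computations in which the Dunkl correction $\sum_{\alpha>0}c_\alpha\,\alpha\alpha^\vee s_\alpha$ (the paper calls it $\Sigma$) appears twice with opposite signs and cancels. The only organisational difference is that the paper first normalises everything to the PBW form, introducing the auxiliary quantity $Q=\sum_{i,j}x_ix_jy_iy_j$ and then relating $Q$ to $(\bx\cdot\by)^2$ via a separate identity, whereas you massage the second double sum directly into $(\bx\cdot\by)^2$; this lets you bypass $Q$ and is marginally more streamlined, but the substance is identical.
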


\begin{proof} This is equation $(2.14)$ in \cite{FH15}, we add further details for completeness.
Define $Q := \sum_{i,j}x_ix_jy_iy_j$ and $\Sigma := \sum_{\alpha >0} c_\alpha\alpha\alpha^\vee s_\alpha$. Here, we see $\alpha\in\fh^*$ and $\alpha^\vee\in\fh$ as elements of $\rca$. Explictly, $\alpha = \sum_i \langle \alpha , y_i \rangle x_i$ and similarly for $\alpha^\vee$. We note the identities
\begin{equation}\label{e:comm1}
\sum_{i, j}x_i[y_j,x_i]y_j = (\bx\cdot\by) - \Sigma
\end{equation}
(where we used $S_{ij}=[y_i,x_j]=[y_j,x_i]=S_{ji}$ and $\alpha s_\alpha \alpha^\vee = -\alpha \alpha^\vee s_\alpha$) and
\begin{equation}\label{e:comm2}
\sum_{i, j}x_i[y_j,x_j]y_i
= n(\bx\cdot \by)+2(\bx\cdot\by)Z - 2\Sigma.
\end{equation}
That said, we compute
\begin{equation}\label{e:xysquare}
(\bx\cdot\by)^2 = \sum_{i,j} x_iy_ix_j y_j
= Q + (\bx\cdot\by) - \Sigma.
\end{equation}
Further,
using (\ref{e:comm1}), (\ref{e:comm2}) and (\ref{e:xysquare}), we get
\begin{align*}
\bM^2 &= \sum_{i<j} M_{ij}^2\\
&= \sum_{i,j} x_i^2y_j^2 - (x_ix_jy_iy_j) +x_i[y_j,x_i]y_j - x_i[y_j,x_j]y_i \\
&= \bx^2\by^2 -Q + ((\bx\cdot\by) - \Sigma) - (n(\bx\cdot\by)+2(\bx\cdot\by)Z - 2\Sigma)\\
&= \bx^2\by^2 - (\bx\cdot\by)^2 - (\bx\cdot\by)(2Z + n - 2),
\end{align*}
where we used $-Q + (\bx\cdot\by) - \Sigma = -(\bx\cdot\by)^2 + 2(\bx\cdot\by) - 2\Sigma$. This finishes the proof.
\end{proof}

\begin{Proposition}\label{p:CasM2}
The Dunkl angular momentum square and the Casimir are related via the identity
\[
\Omega_{\fs\fl(2)} = -\bM^2 + Z(Z + n - 2) + \tfrac{n(n-4)}{4} = -\bM^2 + \big(Z + \tfrac{n-2}{2}\big)^2 - 1.
\]
\end{Proposition}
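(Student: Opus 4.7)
The strategy is to write $\Omega_{\fs\fl(2)} = H^2 + 2(XY+YX)$ purely in terms of $\bx\cdot\by$ and $\bx^2\by^2$, and then substitute the formula for $\bM^2$ obtained in the previous proposition.

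First, set $u := \bx\cdot\by$ and use the commutator sum~\eqref{e:commutatorsum} to obtain $\by\cdot\bx - \bx\cdot\by = \sum_i [y_i,x_i] = n + 2Z$, whence
\[
H \;=\; \tfrac12\big(\bx\cdot\by + \by\cdot\bx\big) \;=\; u + Z + \tfrac{n}{2}.
\]
Since $H$ is $W$-invariant (it lies in the $\fs\fl(2)$-triple centralising $W$) and $Z\in\bbc W$ is central, the element $u$ commutes with $Z$; this will be essential in the final expansion.

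Second, from $X = -\tfrac12\bx^2$, $Y = \tfrac12\by^2$ and the relation $[X,Y] = H$, one gets $[\bx^2,\by^2] = -4H$, so $\by^2\bx^2 = \bx^2\by^2 + 4H$ and therefore
\[
2(XY+YX) \;=\; -\tfrac12\big(\bx^2\by^2 + \by^2\bx^2\big) \;=\; -\bx^2\by^2 - 2H.
\]
Consequently $\Omega_{\fs\fl(2)} = H(H-2) - \bx^2\by^2$.

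Finally, I would plug in $\bx^2\by^2 = \bM^2 + u^2 + u(2Z + n - 2)$ from the previous proposition, together with $H-2 = u + Z + \tfrac{n-4}{2}$, and expand $(u+Z+\tfrac{n}{2})(u+Z+\tfrac{n-4}{2}) = (u+Z)^2 + (n-2)(u+Z) + \tfrac{n(n-4)}{4}$, which is legitimate because $[u,Z]=0$. The $u^2$, $2uZ$ and $u(n-2)$ terms then cancel exactly against those coming from $-\bx^2\by^2$, leaving
\[
\Omega_{\fs\fl(2)} \;=\; -\bM^2 + Z^2 + (n-2)Z + \tfrac{n(n-4)}{4} \;=\; -\bM^2 + Z(Z+n-2) + \tfrac{n(n-4)}{4}.
\]
The equivalent closed form $(Z + \tfrac{n-2}{2})^2 - 1$ follows by completing the square, since $\tfrac{n(n-4)}{4} - \tfrac{(n-2)^2}{4} = -1$. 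The only delicate step is the bookkeeping of these cancellations, which depends crucially on the commutativity $[u,Z]=0$ established at the outset; after that the identity reduces to a routine expansion.
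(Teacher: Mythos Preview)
Your proof is correct and follows the same strategy as the paper: write $H = \bx\cdot\by + Z + \tfrac{n}{2}$, reduce $2(XY+YX)$ to an expression in $\bx^2\by^2$, and then substitute the formula for $\bM^2$ from the preceding proposition. The one noteworthy difference is how you obtain $2(XY+YX) = -\bx^2\by^2 - 2H$: you invoke the $\fs\fl(2)$-relation $[X,Y]=H$ directly, whereas the paper derives the same identity by expanding $\big[y_j^2,x_i^2\big]$ from first principles, introducing auxiliary sums $\Sigma$ and $\Sigma'$ that then cancel. Your shortcut is cleaner and entirely legitimate, but it is a streamlining of the same argument rather than a different route; after that step the two computations coincide line for line.
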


\begin{proof}
We start by noting that the element $H =\tfrac{1}{2}(\bx\cdot\by + \by\cdot\bx)$ can be written as
$H = \bx\cdot\by + \tfrac{n}{2} +Z$.
Since $\bx\cdot\by$ commutes with $Z$ we have that $H^2 = (\bx\cdot\by)^2 + (2Z+n)(\bx\cdot\by) + \big(Z+\tfrac{n}{2}\big)^2$. Next, note that similarly to
(\ref{e:comm1}), using $[y_j,x_i]=[y_i,x_j]$, we have the identities
\begin{equation*}
\sum_{i,j} x_iy_j[y_j,x_i] = \sum_{i,j}x_iy_j\bigg(\delta_{ij} + \sum_{\alpha>0}c_\alpha \langle \alpha,y_i\rangle\langle x_j,\alpha^\vee \rangle s_\alpha\bigg)
= (\bx\cdot\by) + \Sigma
\end{equation*}
and
\begin{equation*}
\sum_{i,j} [y_j,x_i]y_jx_i= \sum_{i,j}\bigg(\delta_{ij} + \sum_{\alpha>0}c_\alpha \langle \alpha,y_i\rangle\langle x_j,\alpha^\vee \rangle s_\alpha\bigg)y_jx_i
= (\by\cdot\bx) + \Sigma',
\end{equation*}
where $\Sigma'= \sum_{\alpha>0}c_\alpha \alpha^\vee\alpha s_\alpha$. Similarly we have $\sum_{i,j} y_j[y_j,x_i]x_i = (\by\cdot\bx) - \Sigma'$. Recall, $X:= -\tfrac{1}{2}\bx^2$ and $Y:=\tfrac{1}{2} \by^2$. Using
$\big[y^2,x^2\big] = [y,x]yx+y[y,x]x + xy[y,x] + x[y,x]y$, we get
\begin{align*}
(-4)(XY + YX) &= \sum_{i,j} x_i^2y_j^2 + y_j^2x_i^2\\
&= 2\big(\bx^2\by^2\big) + 2(\bx\cdot\by + \by\cdot\bx)\\
&= 2\big(\bx^2\by^2\big) + 4\bx\cdot\by +2n + 4Z,
\end{align*}
from which
\begin{align*}
\Omega_{\fs\fl(2)} &= H^2 + 2(XY + YX) \\
&= (\bx\cdot\by)^2 + (2Z+n)(\bx\cdot\by) + \big(Z+\tfrac{n}{2}\big)^2 - \big(\bx^2\by^2\big) - 2(\bx\cdot\by) -n -2Z \\
&= -\bM^2 + \big(Z + \tfrac{n}{2}\big)^2 -n -2Z\\
&= -\bM^2 + \big(Z + \tfrac{n-2}{2}\big)^2 - 1,
\end{align*}
as required.
\end{proof}

\begin{Remark}\label{r:AngHamiltonian}
Comparing the computations above for $\Omega_{\fs\fl(2)}$ and the computations in \cite{FH15} for the angular Calogero--Moser Hamiltonian $H_\Omega$, we get
\[
\Omega_{\fs\fl(2)} = 2H_\Omega + \tfrac{1}{4}n(n-4).
\]
\end{Remark}

\section{Clifford algebra and AMA-Dirac elements}\label{s:Cliff}
Let $\mathcal{C}_\bbr = \mathcal{C}_\bbr(E,B)$ denote the Clifford algebra associated to the pair $(E,B)$.
The Clifford algebra $\mathcal{C}_\bbr$ is the quotient of the tensor algebra $T_\bbr(E) = \oplus_{i\geq 0} T^i(E)$ on $E$ modulo the ideal generated by
the expressions
\[y\otimes y' + y'\otimes y - 2 B(y,y')\]
for all $y,y'\in E$ (see \cite{M76} for more details). Furthermore, with respect to the canonical map $\iota\colon E\to\mathcal{C}_\bbr$, the pair $(\mathcal{C}_\bbr,\iota)$ satisfies the universal property, that, for any unital $\mathbb{R}$-algebra $A$ and any linear map $\varphi\colon E\to A$ satisfying $\varphi(y)\varphi(y') + \varphi(y')\varphi(y) = 2B(y,y')$, there is a unique algebra homomorphism $\tilde\varphi\colon \mathcal{C}_\bbr\to A$ such that $\tilde\varphi\iota = \varphi.$ For each $1\leq j \leq n$, let $c_j := \iota(y_j)$, where $\{y_1,\dots,y_n\}$ is our fixed orthonormal basis of $E$. Then, $\mathcal{C}_\bbr$ is generated by $\{c_1,\dots,c_n\}$, with Clifford relations
\begin{equation}\label{e:CliffRel}
 \{c_i,c_j\} := (c_ic_j + c_jc_i) = 2B(y_i,y_j) = 2\delta_{ij},
\end{equation}
for all $1\leq i,j\leq n$.

\subsection[Pin cover of W]{Pin cover of $\boldsymbol{W}$} \label{s:PinCover}
The reference for this part is~\cite{M76}. We have the $\mathbb{Z}_2$-grading $\mathcal{C}_\bbr = \mathcal{C}_\bbr^0\oplus\mathcal{C}_\bbr^1$, where $\mathcal{C}_\bbr^0$ is the image of $\oplus_{i\geq 0} T^{2i}(E)\subset T(E)$ while $\mathcal{C}_\bbr^1$ is the image of the odd powers in the tensor algebra. We let $\varepsilon\colon \mathcal{C}_\bbr\to\mathcal{C}_\bbr$ denote the automorphism which acts as the identity on $\mathcal{C}_\bbr^0$ and minus the identity on $\mathcal{C}_\bbr^1$. The anti-automorphism ${(\cdot)}^t$ of $T_\bbr(E)$ that sends $\eta = \eta_1\otimes\cdots\otimes\eta_p$ to $\eta^t = \eta_p\otimes\cdots\otimes\eta_1$, for all $\eta_1,\dots,\eta_p\in E$, descends to an anti-automorphism of $\mathcal{C}_\bbr$, called the {\it transpose}. Furthermore, let $\ast$ denote the anti-automorphism $\eta^* = \varepsilon(\eta^t)$, for all $\eta\in\mathcal{C}_\bbr$ and let $N(\eta) = \eta^*\eta$, for $\eta \in \mathcal{C}_\bbr$, denote the {\it spinorial norm}. Recall that the group $\Gamma = \Gamma(E,B)$ defined by
\[
\Gamma = \big\{\eta\in\mathcal{C}_\bbr^\times\mid \varepsilon(\eta)y\eta^{-1}\in E \textup{ for all } y\in E\big\}
\]
is the so-called {\it twisted Clifford group} and the homomorphism $p\colon \Gamma\to \mathsf{O}=\mathsf{O}(E,B)$, defined via $p(\eta)y = \varepsilon(\eta)y\eta^{-1}$, for all $\eta\in\Gamma$ and $y\in\fh$, is such that the sequence
\begin{equation}\label{e:CentExtSES}
1 \longrightarrow \bbr^\times \longrightarrow \Gamma \stackrel{p}{\longrightarrow} \mathsf{O} \longrightarrow 1
\end{equation}
is a short exact sequence. The {\it pinorial group} $\mathsf{Pin} = \mathsf{Pin}(E,B)$ is given by
\[
\mathsf{Pin} = \big\{\eta\in\Gamma \mid N(\eta)^2 = 1 \big\} \subset \Gamma
\]
and the sequence (\ref{e:CentExtSES}) restricts to a short exact sequence
\begin{equation}\label{e:WSES}
1 \longrightarrow \{\pm 1\} \longrightarrow \mathsf{Pin} \stackrel{p}{\longrightarrow} \mathsf{O} \longrightarrow 1.
\end{equation}
The Pin-cover of $W\subset\mathsf{O}$ is defined as $\tilde W := p^{-1}(W)\subset \mathsf{O}$. Given a coroot $\alpha^\vee\in R^\vee\subset E$, recall that we can write $\alpha^\vee = \sum_i\lpi x_i,\alpha^\vee \rpi y_i$. Using (\ref{e:pairings}), note that
\[
\tfrac{1}{|\alpha^\vee|}\iota(\alpha^\vee)
= \tfrac{1}{|\alpha^\vee|}\sum_i B(y_i,\alpha^\vee) c_i
= \tfrac{1}{|\alpha|}\sum_i B^*(x_i,\alpha) c_i
= \tfrac{1}{|\alpha|}\iota\big(B^{-1}(\alpha)\big).
\]
We are thus justified to abuse the notation and define, for any $\alpha\in R$, \[\tilde{s}_\alpha := |\alpha^\vee|^{-1}\alpha^\vee \in \mathcal{C}_\bbr.\]
One can show (see \cite[Proposition~2.6]{M76}) that $p(\tilde{s}_\alpha) = s_\alpha$. Further, $p^{-1}(s_\alpha) = \{\pm \tilde{s}_\alpha\}$. Then, with respect to generators and relations, we have that (see \cite[Theorem~4.2]{M76}), on the one hand~$W$ has presentations
\begin{gather*}
W = \langle s_\alpha,\alpha\in R\mid s_\alpha^2=1,s_\alpha s_\beta s_\alpha = s_\gamma,\gamma = s_\alpha(\beta)\rangle,
\\
W = \langle s_\alpha,\alpha\in \Delta \mid (s_\alpha s_\beta)^{m_{\alpha,\beta}} = 1 \rangle
\end{gather*}
while the double-cover has presentations
\begin{gather}\label{e:PinPresentation}
\tilde{W} = \langle \theta, \tilde{s}_\alpha,\alpha\in R\mid \tilde{s}_\alpha^2=1=\theta^2,\tilde{s}_\alpha \tilde{s}_\beta \tilde{s}_\alpha = \sigma\tilde{s}_\gamma,\gamma = s_\alpha(\beta), \theta \text{ central}\rangle,\\
\label{e:PinPresentationmalpha}
\tilde{W} = \langle \theta, \tilde{s}_\alpha,\alpha\in \Delta \mid (\tilde{s}_\alpha \tilde{s}_\beta)^{m_{\alpha,\beta}} = (\theta)^{m_{\alpha,\beta}-1}, \theta \text{ central}\rangle.
\end{gather}

 We let $\mathcal{C} = \mathcal{C}_\bbr\otimes \bbc$ be the complexification. Letting $\theta = -1 \in \Cc$ the group $\tilde{W}$ is a subgroup of $\Pin \subset\Cc$. However, the group algebra $\mathbb{C}\tilde{W}$ does not inject into $\Cc$. Decomposing the identity as two idempotents $1 = \tfrac{1}{2}(1+\theta) + \tfrac{1}{2}(1-\theta)$, the group algebra $\mathbb{C}\tilde{W}$ splits as a direct sum of two algebras
 \begin{equation}\label{eq::twistedgroupalg} \mathbb{C}\tilde{W} =\mathbb{C}\tilde{W}_+ \oplus \mathbb{C}\tilde{W}_-,
 \end{equation}
 where the central element $\theta$ is specialised to either $+1$ or $-1$ in $\mathbb{C}\tilde{W}_+$ and $\mathbb{C}\tilde{W}_-$ respectively. The algebra $\mathbb{C}\tilde{W}_+$ is isomorphic to $\mathbb{C}W$. Following \cite{K05}, we refer to the algebra $\mathbb{C}\tilde{W}_-$ as the twisted group algebra.

Note that $\mathcal{C}$ has the same presentation by generators and relations as in (\ref{e:CliffRel}). As is well-known, if $n=\dim_\bbr(E)$, then $\mathcal{C}$ has one (resp.\ two) equivalence classes of complex irreducible representations of dimension $2^{\lfloor n/2 \rfloor}$ for $n$ even (resp.~$n$ odd). Let also $\ast$ denote the anti-linear extension to $\Cc$ of the anti-involution $\eta^\ast = \varepsilon(\eta^t)$ defined above.
 Finally, we let $\rho\colon \bbc\tilde W\to \ama\otimes\mathcal{C}$ denote the homomorphism obtained from the diagonal embedding of~$\tilde W$ defined by
\begin{equation}\label{ed:rhodef}
\rho(\tilde w) = p(\tilde w)\otimes \tilde w
\end{equation}
 for all $\tilde w\in\tilde W$ and extended linearly, where $p\colon \tilde W\to W$ is the double-cover projection map and~$\tilde{w}$ is considered as an element in $\Pin \subset \Cc$.

\subsection{AMA-Dirac elements}
Both algebras $\mathsf{H}$ and $\mathcal{C}$
contain a copy of the vector space $\wedge^2\fh$ with basis
$\{\uM_{ij}\mid 1\leq i<j \leq n\}$. In $\mathsf{H}$, these are realised by the elements $M_{ij} = x_iy_j-x_jy_i$ for $1\leq i<j \leq n$ that forms part of the generating set of $\mathsf{A}$ and in $\mathcal{C}$ they are realised by quadratic elements $c_ic_j\in\mathcal{C}$. In what follows, we may use the short hand notation $Y$ to denote $Y \otimes 1 \in \ama \otimes \Cc$ for any $Y \in \ama$. For example, $Y$ may be $M_{ij}$ or $w \in W$.

\begin{Definition}
The {\it Dirac element of the angular momentum algebra} is defined by
\begin{equation*}
 \mathcal{D} = \sum_{i<j} M_{ij}\otimes c_ic_j \in \mathsf{A}\otimes \mathcal{C}.
\end{equation*}
For brevity, we shall refer to this element as the AMA-Dirac element.
\end{Definition}

\begin{Proposition}
The AMA-Dirac element is independent of the choice of orthonormal basis $\{y_1,\dots,y_n\}$ made. In particular, it is $\rho\big(\tilde W\big)$-invariant.
\end{Proposition}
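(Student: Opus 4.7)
The plan is to reduce both assertions to a single change-of-basis computation. First I would rewrite
\[
\mathcal{D} \;=\; \tfrac{1}{2}\sum_{i,j} M_{ij}\otimes c_ic_j,
\]
which is legitimate because $M_{ii}=0$ while $M_{ji}=-M_{ij}$ and $c_jc_i=-c_ic_j$ for $i\neq j$. In this symmetric form, basis-independence becomes a routine tensor contraction: given a second orthonormal basis $y_i'=\sum_k g_{ki}y_k$ with $g\in \mathsf{O}(E,B)$, one has $x_i'=\sum_k g_{ki}x_k$ (since $B$ is preserved) and $c_i'=\sum_k g_{ki}c_k$, so $M_{ij}'=\sum_{k,l}g_{ki}g_{lj}M_{kl}$ and $c_i'c_j'=\sum_{m,p}g_{mi}g_{pj}c_mc_p$. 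The resulting quadruple sum collapses twice via the orthogonality relation $\sum_i g_{ki}g_{mi}=\delta_{km}$, recovering $\tfrac{1}{2}\sum_{k,l} M_{kl}\otimes c_kc_l = \mathcal{D}$.

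For the $\rho(\tilde W)$-invariance, the key observation is that conjugation by $\rho(\tilde w)=p(\tilde w)\otimes\tilde w$ implements precisely a change of orthonormal basis via $w:=p(\tilde w)\in W\subset\mathsf{O}(E,B)$. On the AMA factor this is built in: since $\rca$ is the smash product with $\bbc W$, one has $wx_iw^{-1}=\sum_k w_{ki}x_k$ and $wy_iw^{-1}=\sum_k w_{ki}y_k$, hence $wM_{ij}w^{-1}=\sum_{k,l}w_{ki}w_{lj}M_{kl}$. On the Clifford factor, the defining property of the twisted Clifford group gives $\varepsilon(\tilde w)c_i\tilde w^{-1}=\iota(w(y_i))=\sum_k w_{ki}c_k$; the automorphism $\varepsilon$ introduces a sign $(-1)^{\deg\tilde w}$, but this sign cancels when conjugating the \emph{even} element $c_ic_j$, yielding
\[
\tilde w(c_ic_j)\tilde w^{-1} \;=\; \sum_{k,l} w_{ki}w_{lj}\, c_kc_l.
\]
Hence $\rho(\tilde w)\mathcal{D}\rho(\tilde w)^{-1}$ equals the expression $\tfrac{1}{2}\sum_{i,j} M_{ij}'\otimes c_i'c_j'$ in the rotated orthonormal basis $y_i'=w(y_i)$, and the first part of the proof identifies this with $\mathcal{D}$.

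The only mildly technical point is keeping track of the $\varepsilon$-twist on the generators $\tilde s_\alpha$ of $\tilde W$, which are odd in $\mathcal{C}$: here I would verify explicitly that the two $(-1)$ signs produced by conjugating $c_i$ and $c_j$ individually cancel in $\tilde s_\alpha c_ic_j\tilde s_\alpha^{-1}$, as indicated above. Everything else is straightforward bookkeeping with the orthogonality of the transition matrix.
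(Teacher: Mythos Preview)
Your proposal is correct and follows essentially the same approach as the paper: both rewrite $\mathcal{D}$ in the symmetrised form $\tfrac{1}{2}\sum_{i,j}M_{ij}\otimes c_ic_j$, collapse a change-of-basis sum using the orthogonality relations of the transition matrix, and then deduce $\rho(\tilde W)$-invariance by observing that conjugation by $\rho(\tilde s_\alpha)=s_\alpha\otimes\tilde s_\alpha$ simply rewrites $\mathcal{D}$ in the reflected orthonormal basis. Your treatment is slightly more explicit about the $\varepsilon$-twist on the Clifford side (noting that the sign cancels on the even element $c_ic_j$), which the paper leaves implicit, but the argument is otherwise the same.
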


\begin{proof}
The $\rho\big(\tilde W\big)$-invariance follows from the independence of the basis since conjugating $\mathcal{D}$ by $\rho(\tilde{s}_\alpha) = s_\alpha\otimes\tilde{s}_\alpha$ causes us to write the expression for~$\mathcal{D}$ with respect to the bases $\{s_\alpha(y_1),\dots,s_\alpha(y_n)\}$ and $\{s_\alpha(x_1),\dots,s_\alpha(x_n)\}$.

The proof for the independence of the choice of basis is standard, and we briefly recall the steps. If $\{y'_1,\dots,y'_n\}$ is another choice, we have $y'_j = \sum_k Q_{jk}y_k$ and $x'_j = B(y'_j) = \sum_k Q_{jk}x_k$ where the collection $\{Q_{jk}\mid 1\leq j,k \leq n\}$ satisfy $\sum_k Q_{ik}Q_{jk} = \delta_{ij}$. It is then straightforward to check that
\[
2\mathcal{D}' = \sum_{i,j}M'_{ij}\otimes c'_ic'_j = \sum_{k,l}M_{kl}\otimes c_kc_l = 2\mathcal{D},
\]
where $M'_{ij} = x'_iy'_j - x'_jy'_i \in \ama$ and $c'_i=\iota(y'_i)\in\Cc$.
\end{proof}

As in every Dirac theory, we now compute the square of the AMA-Dirac element. We will show that upon subtracting a correction term this element yields a square-root of the Casimir~$\Omega_{\mathfrak{sl}(2)}$, modulo a constant. Before we compute $\mathcal{D}^2$, we shall need some preliminary computations.

Let $\Pi = \big\{(i,j)\in \mathbb{Z}^2;\, 1\leq i<j\leq n\big\}$. Note that we can write the Cartesian product as the disjoint union
\begin{equation}\label{e:CartDecomp}
\Pi^2 = \Pi_0 \cup \Pi_1 \cup \Pi_2
\end{equation}
where $\Pi_q := \big\{((i,j),(k,l))\in\Pi^2;\, |\{i,j\}\cap\{k,l\}|=q \big\}$, for $q \in \{0,1,2\}$. If $\pi = (i,j) \in \Pi$, we shall write $c_\pi = c_ic_j$ in the Clifford algebra and $M_\pi = M_{ij}$ in $\mathsf{A}$. Then,
\begin{equation}\label{e:squaredecomp}
\mathcal{D}^2 = \sum_{(\pi,\sigma) \in \Pi^2 } M_\pi M_\sigma \otimes c_\pi c_\sigma = \Sigma_0 + \Sigma_1 + \Sigma_2,
\end{equation}
where $\Sigma_q$ is the sum over $\Pi_q$, in the decomposition (\ref{e:CartDecomp}).

\begin{Lemma}
With notations as in \eqref{e:squaredecomp}, we have $\Sigma_2 = -\mathbf{M}^2$ and $\Sigma_0 = 0$.
\end{Lemma}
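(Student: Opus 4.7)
The plan is to handle $\Sigma_2$ and $\Sigma_0$ separately.

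For $\Sigma_2$, first note that the condition $|\{i,j\}\cap\{k,l\}|=2$ together with the orderings $i<j$, $k<l$ forces $(i,j)=(k,l)$, so the sum collapses to a single sum over $\Pi$. A direct Clifford computation using $c_jc_i=-c_ic_j$ (valid for $i\neq j$) together with $c_i^2=1$ yields $(c_ic_j)^2=-c_i^2c_j^2=-1$, whence $\Sigma_2=-\sum_{i<j}M_{ij}^2\otimes 1=-\bM^2$.

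For $\Sigma_0$, my strategy is to group the sum according to the underlying $4$-element subset $\{a<b<c<d\}\subseteq\{1,\dots,n\}$ formed by the four distinct indices appearing in each pair; each such subset contributes six ordered pairs $((i,j),(k,l))$ with $\{i,j,k,l\}=\{a,b,c,d\}$. Rewriting each product $c_ic_jc_kc_l$ as a signed multiple of $c_ac_bc_cc_d$ via the Clifford relations, the signs for the six orderings are $+,-,+,+,-,+$, so that the total contribution of the subset $\{a,b,c,d\}$ equals
\begin{equation*}
\bigl((M_{ab}M_{cd}+M_{cd}M_{ab})-(M_{ac}M_{bd}+M_{bd}M_{ac})+(M_{ad}M_{bc}+M_{bc}M_{ad})\bigr)\otimes c_ac_bc_cc_d.
\end{equation*}
Hence the claim $\Sigma_0=0$ reduces to showing that the bracketed expression above vanishes in $\ama$ for every choice of four distinct indices, a Pfaffian-type identity.

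To establish this identity, I would apply the crossing relation~\eqref{e:agacrossrel} twice---once with $(i,j,k,l)=(a,b,c,d)$ and once with $(i,j,k,l)=(c,d,a,b)$---simplifying each via the antisymmetry $M_{ji}=-M_{ij}$ and the symmetry $S_{ij}=S_{ji}$ (the latter valid because $W$ is a real reflection group). Summing the two expresses $(M_{ab}M_{cd}+M_{cd}M_{ab})+(M_{ad}M_{bc}+M_{bc}M_{ad})$ in terms of $2M_{ac}M_{bd}$ plus a collection of $M\cdot S$ terms. Writing $2M_{ac}M_{bd}=(M_{ac}M_{bd}+M_{bd}M_{ac})+[M_{ac},M_{bd}]$ and expanding the remaining commutator via~\eqref{e:agacomrel} with $(i,j,k,l)=(a,c,b,d)$, I expect the $M\cdot S$ contributions on both sides to cancel exactly, leaving the desired Pfaffian identity. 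The main technical obstacle is careful sign tracking: the antisymmetry of $M_{ij}$ under $i\leftrightarrow j$ and the alternating signs from Clifford reordering each enter multiple times, and consistent bookkeeping will be essential to confirm that the $M\cdot S$ residues cancel rather than leave a spurious correction.
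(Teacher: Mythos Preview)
Your proposal is correct and follows essentially the same route as the paper: both treatments of $\Sigma_2$ are identical, and for $\Sigma_0$ both group by the underlying $4$-element subset, obtain the same anticommutator expression, and then show it vanishes using the relations~\eqref{e:agacomrel} and~\eqref{e:agacrossrel}. The only cosmetic difference is the order in which the two relations are invoked: the paper writes the expression as $2(M_{ij}M_{kl}+M_{jk}M_{il}+M_{ki}M_{jl})$ plus a sum of three commutators, evaluates the commutators via~\eqref{e:agacomrel} to get $-2(M_{ij}S_{kl}+M_{jk}S_{il}+M_{ki}S_{jl})$, and then applies~\eqref{e:agacrossrel} once, whereas you apply~\eqref{e:agacrossrel} twice and~\eqref{e:agacomrel} once---your anticipated cancellation of the $M\cdot S$ terms does indeed occur exactly.
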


\begin{proof}
As $(c_ic_j)^2 = -1$ when $i\neq j$, it immediately follows that $\Sigma_2 = -\mathbf{M}^2$. As for $\Sigma_0$, to each pair $((i,j),(k,l)) \in \Pi_0$, noting that $[c_ic_j,c_kc_l]=0$, after ordering the $4$-tuple $i<j<k<l$,
and fixing the Clifford element $c_ic_jc_kc_l$ to the right-hand side of the tensor product, the contribution on the left-hand side becomes
\begin{equation*}
(M_{ij}M_{kl} + M_{kl}M_{ij} - M_{ik}M_{jl} - M_{jl}M_{ik} + M_{il}M_{jk} + M_{jk}M_{il})\otimes c_ic_jc_kc_l,
\end{equation*}
from which we obtain
\begin{gather*}
 \Sigma_0 = \sum_{1\leq i<j<k<l \leq n}
 2(M_{ij}M_{kl} + M_{jk}M_{il} + M_{ki}M_{jl})\otimes c_ic_jc_kc_l\\
\hphantom{\Sigma_0 = \sum_{1\leq i<j<k<l \leq n}}{}
+ ([M_{kl},M_{ij}] + [M_{il},M_{jk}] + [M_{jl},M_{ki}])\otimes c_ic_jc_kc_l.
\end{gather*}
Using the relation (\ref{e:agacomrel}) of $\mathsf{A}$ and the symmetry $S_{ab} = S_{ba}$ for any indices $a$, $b$, we obtain
\begin{equation*}
 [M_{kl},M_{ij}] + [M_{il},M_{jk}] + [M_{jl},M_{ki}] =
 -2(M_{ij}S_{kl} + M_{jk}S_{il} + M_{ki}S_{jl}),
\end{equation*}
from which, using now (\ref{e:agacrossrel}), we obtain $\Sigma_0=0$.
\end{proof}

\begin{Lemma}With notations as in \eqref{e:squaredecomp}, we have $\Sigma_1 = (n-2)\mathcal{D} + \{\mathcal{D},Z\}$.
\end{Lemma}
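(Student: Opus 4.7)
The strategy is to enumerate the summands of $\Sigma_1$ by the three-element subset $\{a,b,c\} \subset \{1,\dots,n\}$ with $a<b<c$ on which the indices of the pair $((i,j),(k,l)) \in \Pi_1$ are supported. For each such triple, there are exactly six ordered pairs, and after using $c_m^2 = 1$ together with the Clifford anticommutation relations to reduce each product $c_ic_jc_kc_l$ to $\pm c_pc_q$ for the two non-repeated indices, the contribution from $\{a,b,c\}$ assembles into
\[
T_{abc} = [M_{ac}, M_{ab}]\otimes c_bc_c + [M_{ab},M_{bc}]\otimes c_ac_c + [M_{bc},M_{ac}]\otimes c_ac_b.
\]
Expanding each commutator via the AMA relation \eqref{e:agacomrel} realises $\Sigma_1 = \sum_{a<b<c} T_{abc}$ as a linear combination of terms $M_\pi S_\sigma \otimes c_pc_q$.

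Next, I would re-organise the sum by fixing the Clifford factor $c_pc_q$ (for $p<q$) and collecting all contributions, which come from three subcases depending on whether the third index $k$ lies below $p$, between $p$ and $q$, or above $q$. After using the antisymmetry $M_{ij}=-M_{ji}$ and the symmetry $S_{ij}=S_{ji}$, one checks that in each subcase the contribution equals $M_{kp}S_{kq} - M_{kq}S_{kp}$ plus terms proportional to $M_{pq}$. Isolating the $M_{pq}$-terms and applying Lemma \eqref{e:commutatorsum} to evaluate $\sum_{k\neq p,q} S_{kk} = n+2Z-S_{pp}-S_{qq}$, the coefficient of $\otimes c_pc_q$ in $\Sigma_1$ becomes
\[
\alpha_{pq} = M_{pq}\bigl(n + 2Z - S_{pp} - S_{qq}\bigr) + \sum_{k\neq p,q}\bigl(M_{kp}S_{kq} - M_{kq}S_{kp}\bigr).
\]

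The main obstacle, and the final step, is to match $\alpha_{pq}$ with the coefficient $(n-2)M_{pq} + \{M_{pq}, Z\}$ expected from the RHS. This reduces to proving the identity
\[
\sum_{k \neq p,q}\bigl(M_{kp}S_{kq} - M_{kq}S_{kp}\bigr) = [Z, M_{pq}] + M_{pq}(S_{pp} + S_{qq}) - 2M_{pq}.
\]
I would prove this by working inside $\rca$, where $M_{pq} = x_py_q - x_qy_p$, and computing $[Z, M_{pq}] = \sum_\alpha c_\alpha [s_\alpha, M_{pq}]$ via the action of reflections on $\fh$ and $\fh^*$. A short computation using \eqref{e:pairings} yields the key formula
\[
[s_\alpha, M_{ij}] = \bigl(\langle\alpha,y_j\rangle \tilde M_i^\alpha - \langle\alpha,y_i\rangle \tilde M_j^\alpha\bigr)s_\alpha, \qquad \tilde M_i^\alpha := \sum_k \langle x_k,\alpha^\vee\rangle M_{ki}.
\]
On the other side, substituting the definition of $S_{kq}$ and $S_{kp}$ into the left-hand sum and extending the range from $k\neq p,q$ to all $k$ (with corrections contributing $\langle x_q,\alpha^\vee\rangle M_{pq}$ and $\langle x_p,\alpha^\vee\rangle M_{pq}$, coming from $M_{qp}$ and $M_{pq}$), the first piece reproduces exactly $[Z, M_{pq}]$, while the correction terms combine with the identities $S_{pp} - 1 = \sum_\alpha c_\alpha \langle\alpha,y_p\rangle\langle x_p,\alpha^\vee\rangle s_\alpha$ and the analogue for $S_{qq}$ to yield $M_{pq}(S_{pp}+S_{qq}-2)$. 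Assembling these computations gives $\alpha_{pq} = (n-2)M_{pq} + M_{pq}Z + ZM_{pq}$ for every pair $p<q$, which is the desired identity $\Sigma_1 = (n-2)\mathcal{D} + \{\mathcal{D}, Z\}$.
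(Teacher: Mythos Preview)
Your proposal is correct and follows essentially the same route as the paper: group the $\Pi_1$-contributions by ordered triples to obtain the three-commutator expression $T_{abc}$, then fix a Clifford factor $c_pc_q$ and collect the resulting $M_\pi S_\sigma$ terms into what the paper calls $C(i,j)$ (your $\alpha_{pq}$), and finally reduce to the computation of $\sum_k(M_{ik}S_{kj}+M_{kj}S_{ik})$. The only cosmetic difference is that the paper extends the sum to all $k$ immediately and packages the remainder as $\epsilon(i,j)=2M_{ij}-[Z,M_{ij}]$ via a direct expansion of $S_{kj}$, whereas you first compute $[s_\alpha,M_{ij}]$ and then match; these are the same calculation in reversed order, and the paper's version spares you the case split on the position of the third index.
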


\begin{proof}
Each pair $((i,j),(k,l)) \in \Pi_1$ has exactly three distinct entries. Using the Clifford relations, each product $c_\pi c_\sigma$ with $(\pi,\sigma)\in\Pi_1$ reduces to a product of the type $c_ic_j$, for distinct indices $i$, $j$. For example, $c_ic_kc_jc_k = -c_ic_j$ and so on.
Moreover, we can label the sum $\Sigma_1$ in terms of ordered triples $(i<j<k)$ and we obtain
\begin{equation*}
 \Sigma_1 = \sum_{i<j<k} [M_{ik},M_{ij}]\otimes c_jc_k +
 [M_{ij},M_{jk}]\otimes c_ic_k + [M_{jk},M_{ik}]\otimes c_ic_j,
\end{equation*}
which, after applying the relations of $\mathsf{A}$ and the symmetry $S_{ab} = S_{ba}$ for the indices, yields
\begin{gather*}
\Sigma_1 = \sum_{i<j<k}\big\{(M_{jk}S_{ii} - M_{ji}S_{ik} - M_{ik}S_{ji})\otimes c_jc_k
+ (M_{ik}S_{jj} - M_{ij}S_{jk} - M_{jk}S_{ij})\otimes c_ic_k \\
\hphantom{\Sigma_1 = \sum_{i<j<k}}{}
 + (M_{ij}S_{kk} - M_{ik}S_{kj} - M_{kj}S_{ik})\otimes c_ic_j\big\}.
\end{gather*}
Thus, each Clifford element $c_ic_j$, contributes to the sum $\Sigma_1$ with the quantity $C(i,j)\in\mathsf{A}$ given by
\begin{align*}
C(i,j) &= \sum_{k\notin\{i,j\}} (M_{ij}S_{kk} - M_{ik}S_{kj} - M_{kj}S_{ik})\\
&= M_{ij}(n+2Z) - \sum_{k=1}^n (M_{ik}S_{kj} + M_{kj}S_{ik}).
\end{align*}
Furthermore, denoting $\epsilon(i,j) = \sum_{k=1}^n (M_{ik}S_{kj} + M_{kj}S_{ik})$, we obtain
\begin{align*}
 \epsilon(i,j) &= 2M_{ij} +
 \sum_{\alpha>0}c_\alpha (\alpha(\langle x_i,\alpha^\vee\rangle y_j - \langle x_j,\alpha^\vee\rangle y_i) - (\langle\alpha,y_i\rangle x_j - \langle\alpha,y_j\rangle x_i)\alpha^\vee)s_\alpha\\
 &= 2M_{ij} + \sum_{\alpha>0}c_\alpha(M_{ij}s_\alpha - s_\alpha M_{ij}).
\end{align*}
We conclude, therefore,
that
\begin{align*}
 \Sigma_1 &= \sum_{i<j}C(i,j)\otimes c_ic_j\\
 &=(n-2)\mathcal{D} + 2\mathcal{D}Z + [Z,\mathcal{D}],
\end{align*}
and the claim follows from $\{Z,\mathcal{D}\} = 2\mathcal{D}Z + [Z,\mathcal{D}]$.
\end{proof}

\begin{Theorem}
We have
\[
\mathcal{D}^2 = -\mathbf{M}^2 + (n-2)\mathcal{D} + \{\mathcal{D},Z\}.
\]
\end{Theorem}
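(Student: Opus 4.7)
The strategy is to combine the three preceding lemmas with the decomposition of $\mathcal{D}^2$ from equation~(\ref{e:squaredecomp}). The starting point is the observation that, since $\mathcal{D} = \sum_{i<j} M_{ij}\otimes c_ic_j$, the square naturally splits as $\mathcal{D}^2 = \Sigma_0 + \Sigma_1 + \Sigma_2$ according to the partition $\Pi^2 = \Pi_0 \cup \Pi_1 \cup \Pi_2$, where the subscript records the overlap $q = |\{i,j\}\cap\{k,l\}|$ between the Clifford index pairs. The Clifford algebra behaves qualitatively differently in each case: $q=2$ gives $(c_ic_j)^2 = -1$; $q=0$ gives a commuting pair whose product is a $4$-index Clifford monomial; and $q=1$ reduces, via the Clifford relations, to a $2$-index monomial (e.g., $c_ic_kc_jc_k = -c_ic_j$). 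This case separation is exactly the structure dictating the three preceding lemmas.

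With the lemmas in hand, the theorem reduces to straightforward addition: the first lemma yields $\Sigma_2 = -\mathbf{M}^2$ and $\Sigma_0 = 0$, and the second lemma gives $\Sigma_1 = (n-2)\mathcal{D} + \{\mathcal{D},Z\}$. Summing,
\[
\mathcal{D}^2 \;=\; \Sigma_0 + \Sigma_1 + \Sigma_2 \;=\; 0 + \bigl((n-2)\mathcal{D} + \{\mathcal{D},Z\}\bigr) + (-\mathbf{M}^2),
\]
which is the claimed identity.

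Since all the serious work has been localized in the two previous lemmas, no new difficulty arises here. For the record, the conceptual heart of the calculation lies in $\Sigma_0 = 0$, which is precisely where the crossing relations~(\ref{e:agacrossrel}) of the AMA are used in an essential way, and in the reassembly of $\Sigma_1$, which requires the identity $\sum_k S_{kk} = n + 2Z$ from~(\ref{e:commutatorsum}) to convert the $S_{ab}$ contributions into the combination $(n-2)\mathcal{D} + \{\mathcal{D},Z\}$. The theorem should therefore be presented as an immediate corollary of the two lemmas.
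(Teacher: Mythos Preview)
Your proof is correct and follows exactly the paper's approach: the paper's own proof is the single line ``Follows directly from the previous lemmas and the identity~(\ref{e:squaredecomp}),'' and your argument is precisely the expanded version of that, invoking $\Sigma_0=0$, $\Sigma_1=(n-2)\mathcal{D}+\{\mathcal{D},Z\}$, $\Sigma_2=-\mathbf{M}^2$ and summing.
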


\begin{proof}
Follows directly from the previous lemmas and the identity (\ref{e:squaredecomp}).
\end{proof}

\begin{Corollary}\label{c:DiracSquare}
Let $\phi := \tfrac{1}{2}(2Z + n-2)$. The element $\mathfrak{D}_0 = (\mathcal{D} - \phi)$ is a square root of a Casimir element of $\mathfrak{sl}(2)$.
\end{Corollary}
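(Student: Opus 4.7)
The plan is a direct computation using the two key results already established: the formula for $\mathcal{D}^2$ from the preceding theorem and the relation between $\mathbf{M}^2$ and $\Omega_{\mathfrak{sl}(2)}$ from Proposition~\ref{p:CasM2}. Since $\phi = Z + \tfrac{n-2}{2}$ lies in $\bbc W \otimes 1$ and, crucially, $Z$ is central in $\bbc W$ while the scalar $\tfrac{n-2}{2}$ is central everywhere, $\phi$ behaves as a well-controlled element when combined with $\mathcal{D}$.

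First, I would expand
\[
\mathfrak{D}_0^2 = (\mathcal{D} - \phi)^2 = \mathcal{D}^2 - \{\mathcal{D},\phi\} + \phi^2,
\]
and observe that
\[
\{\mathcal{D},\phi\} = \{\mathcal{D},Z\} + (n-2)\mathcal{D},
\]
since the scalar part commutes freely with $\mathcal{D}$. Substituting the formula $\mathcal{D}^2 = -\mathbf{M}^2 + (n-2)\mathcal{D} + \{\mathcal{D},Z\}$ from the theorem immediately above gives a clean cancellation of the $(n-2)\mathcal{D}$ and $\{\mathcal{D},Z\}$ terms, leaving
\[
\mathfrak{D}_0^2 = -\mathbf{M}^2 + \phi^2 = -\mathbf{M}^2 + \bigl(Z + \tfrac{n-2}{2}\bigr)^2.
\]

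Finally, I would invoke Proposition~\ref{p:CasM2}, which says precisely that $-\mathbf{M}^2 + \bigl(Z + \tfrac{n-2}{2}\bigr)^2 = \Omega_{\mathfrak{sl}(2)} + 1$. This yields
\[
\mathfrak{D}_0^2 = \Omega_{\mathfrak{sl}(2)} + 1,
\]
so that $\mathfrak{D}_0$ is indeed, up to an additive constant, a square root of the Casimir of $\mathfrak{sl}(2)$. There is no real obstacle here; the only subtlety is keeping track of the fact that $\phi$ should be read as $\phi \otimes 1 \in \ama \otimes \mathcal{C}$, and that $Z$ being central in $\bbc W$ is what makes the anticommutator with $\mathcal{D}$ well-defined without producing extra curvature terms.
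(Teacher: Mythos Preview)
Your proof is correct and follows essentially the same route as the paper's own argument: expand $(\mathcal{D}-\phi)^2$, substitute the formula $\mathcal{D}^2 = -\mathbf{M}^2 + (n-2)\mathcal{D} + \{\mathcal{D},Z\}$, observe the cancellation, and then apply Proposition~\ref{p:CasM2} to identify $-\mathbf{M}^2 + \phi^2$ with $\Omega_{\mathfrak{sl}(2)}+1$. The only difference is expository---you spell out the splitting $\{\mathcal{D},\phi\} = \{\mathcal{D},Z\} + (n-2)\mathcal{D}$ explicitly, whereas the paper leaves it inside one displayed line.
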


\begin{proof}
We compute directly to get
\begin{align*}
 \mathfrak{D}_0^2 &= \mathcal{D}^2 + \phi^2 -\{\mathcal{D},\phi\}\\
 &= -\mathbf{M}^2 + (n-2)\mathcal{D} + \{\mathcal{D},Z\} - \big\{\mathcal{D},Z + \tfrac{n-2}{2}\big\} + \big(Z + \tfrac{n-2}{2}\big)^2\\
 &= \Omega_{\mathfrak{sl}(2)} + 1,
\end{align*}
as required, where use was made of Proposition \ref{p:CasM2} in the last equality.
\end{proof}

\section[AMA-Dirac and the SCasimir of osp(1|2)]{AMA-Dirac and the SCasimir of $\boldsymbol{\mathfrak{osp}(1|2)}$}\label{s:SCasimir}
Now recall (see for example \cite{DOV18a} and \cite{DOV18b}) that the algebra $\mathsf{H}\otimes\mathcal{C}$ contains a copy of the Lie superalgebra $\mathfrak{osp}(1|2)$ spanned by the Lie triple $(H,X,Y)\subset \mathsf{H}$ together with the elements
\[
\uD = \sum_i y_i\otimes c_i,\qquad \ux =\sum_i x_i\otimes c_i,
\]
of $\mathsf{H}\otimes\mathcal{C}$. The element $\uD$ is often referred to as the Dunkl--Dirac operator, as it squares to the Dunkl--Laplace operator when viewed as an operator on the polynomial space. Next, we relate the AMA-Dirac element with the SCasimir $\mathcal{S}$ of $\mathfrak{osp}(1|2)$.

\begin{Proposition}
We have the following identity:
\[-2\mathcal{D} = [\uD,\ux] - (n+2Z) = [\uD,\ux] - 2(\phi+1).\]
\end{Proposition}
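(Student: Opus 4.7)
The plan is a direct computation of the commutator $[\uD,\ux]$ in the tensor product algebra $\mathsf{H}\otimes\mathcal{C}$, splitting the resulting double sum according to whether the Clifford indices coincide. Note first that the two claimed forms of the right-hand side are literally the same identity, since $2(\phi+1) = 2Z+n-2+2 = n+2Z$, so I only need to prove $-2\mathcal{D} = [\uD,\ux] - (n+2Z)$.

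Expanding gives
\[
[\uD,\ux] = \sum_{i,j}\bigl(y_ix_j\otimes c_ic_j - x_jy_i\otimes c_jc_i\bigr).
\]
I would separate the sum into the diagonal part ($i=j$) and the off-diagonal part ($i\neq j$). On the diagonal, $c_i^2=1$ by the Clifford relation (\ref{e:CliffRel}), so the contribution is $\sum_i[y_i,x_i]\otimes 1 = \sum_i S_{ii}$, which equals $n+2Z$ by Lemma (\ref{e:commutatorsum}). This is exactly the term to be subtracted in the identity, so my job reduces to showing that the off-diagonal sum is $-2\mathcal{D}$.

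For the off-diagonal part I use $c_jc_i=-c_ic_j$ when $i\neq j$ to turn the commutator into an anticommutator,
\[
\sum_{i\neq j}(y_ix_j+x_jy_i)\otimes c_ic_j,
\]
and then group the terms indexed by $\{i,j\}$ as unordered pairs with $i<j$, getting
\[
\sum_{i<j}\bigl[(y_ix_j+x_jy_i)-(y_jx_i+x_iy_j)\bigr]\otimes c_ic_j.
\]
Using $y_ax_b = x_by_a + S_{ab}$ together with $S_{ij}=S_{ji}$, the $S$-contributions cancel inside the bracket and the remaining piece is $2(x_jy_i - x_iy_j) = -2M_{ij}$. Summing over $i<j$ yields $-2\mathcal{D}$, completing the identity.

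There is no real obstacle here: the only place one must be careful is keeping track of the order of the factors in $\mathsf{H}$ (so as to use $S_{ij}=S_{ji}$ correctly) and the anticommuting behaviour of the Clifford generators. Everything else is bookkeeping, and the final answer matches $-2\mathcal{D} + (n+2Z)$ as claimed.
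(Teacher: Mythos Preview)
Your proof is correct and follows essentially the same route as the paper: expand $[\uD,\ux]$, split into diagonal and off-diagonal parts using the Clifford relations, apply equation~(\ref{e:commutatorsum}) for the diagonal, and use $S_{ij}=S_{ji}$ to reduce the off-diagonal bracket to $-2M_{ij}$. The only cosmetic difference is that the paper uses the identity $c_jc_i = 2\delta_{ij}-c_ic_j$ uniformly before splitting, whereas you separate $i=j$ and $i\neq j$ first; the computations are otherwise identical.
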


\begin{proof}
Using that, for all $i\neq j$, we have $y_ix_j - y_jx_i = x_jy_i +S_{ij} - x_iy_j - S_{ji} = x_jy_i - x_iy_j = -M_{ij}$ in $\mathsf{H}$, it is straightforward to compute
\begin{align*}
 [\uD,\ux] &= \sum_{i,j}(y_ix_j\otimes c_ic_j - x_jy_i\otimes c_jc_i)\\
 &=\sum_{i,j}( (y_ix_j + x_jy_i)\otimes c_ic_j -x_jy_i\otimes 2\delta_{ij}\\
 &=\sum_i[y_i,x_i]\otimes 1 + \sum_{i< j}(y_ix_j + x_jy_i - y_jx_i - x_iy_j)\otimes c_ic_j\\
 &=(n+2Z)\otimes 1 -2\mathcal{D},
\end{align*}
where, in the last equation, we used~(\ref{e:commutatorsum}). The claim now follows immediately.
\end{proof}

\begin{Corollary}
As elements of $\mathsf{H}\otimes\mathcal{C}$, the AMA-Dirac element and the SCasimir of~$\mathfrak{osp}(1|2)$ satisfy
$\mathcal{D} +\mathcal{S}=\frac{1}{2} + \phi$.
\end{Corollary}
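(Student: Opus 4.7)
The plan is to rewrite both $\mathcal{D}$ and $\mathcal{S}$ in a common form involving the anti-commutator-like expression $[\uD,\ux]$, and then just compare. From the proposition immediately preceding the corollary we already have the identity
\[
\mathcal{D} = -\tfrac{1}{2}[\uD,\ux] + \phi + 1,
\]
so the whole task reduces to producing a matching expression for $\mathcal{S}$ in the same realization and adding.

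First, I would recall from \cite{DOV18a,DOV18b} the explicit expression of the SCasimir of $\mathfrak{osp}(1|2)$ inside $\mathsf{H}\otimes\mathcal{C}$. In the realization where the odd part of $\mathfrak{osp}(1|2)$ is spanned by the Dunkl--Dirac $\uD$ and the vector-variable $\ux$, the SCasimir takes the form
\[
\mathcal{S} \;=\; \tfrac{1}{2}\bigl([\uD,\ux]-1\bigr),
\]
an element that commutes with the even part $(H,X,Y)$ and anti-commutes with the odd generators $\uD$ and $\ux$. I would verify (or just cite) this is the correct normalization used in \cite{DOV18a,DOV18b} so that the characterizing super-commutation relations hold; this is essentially a direct consequence of the $\mathfrak{osp}(1|2)$-commutation relations that were already established for the Dunkl--Cherednik triple.

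Adding the two expressions, I get
\[
\mathcal{D}+\mathcal{S} \;=\; \Bigl(-\tfrac{1}{2}[\uD,\ux] + \phi + 1\Bigr) + \Bigl(\tfrac{1}{2}[\uD,\ux] - \tfrac{1}{2}\Bigr) \;=\; \phi + \tfrac{1}{2},
\]
which is the claimed identity. The only non-routine step is pinning down the exact normalization of $\mathcal{S}$ used by the cited authors; once that is done the statement is an immediate algebraic consequence of the previous proposition, and no further computation is needed.
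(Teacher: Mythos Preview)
Your proof is correct and follows essentially the same approach as the paper: both invoke the expression $\mathcal{S}=\tfrac{1}{2}([\uD,\ux]-1)$ from \cite{DOV18a} and combine it with the identity $\mathcal{D}=-\tfrac{1}{2}[\uD,\ux]+\phi+1$ from the preceding proposition to obtain the result by direct addition.
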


\begin{proof}
With our notational conventions, the SCasimir of $\mathfrak{osp}(1|2)$ is given by
$\mathcal{S} = \tfrac{1}{2}([\uD,\ux] - 1)$ (see \cite[equation~(3.3)]{DOV18a}). The claim follows from the previous proposition.
\end{proof}

\section{Vogan's conjecture and Dirac cohomology}\label{s:Cohomology}

Inspired by \cite{BCT12} we prove an analogue of Vogan's conjecture in the context of the angular momentum algebra~$\mathsf{A}$. Vogan's original conjecture states that if the Dirac cohomology for a $(\mathfrak{g},K)$ module $X$ is non-zero then the infinitesimal character of~$X$ can be described in terms of the highest weight of a $\tilde{K}$-type in the cohomology. This conjecture was proved in~\cite{HP02}. However, in our context, instead of a single Dirac operator relating the center of the algebra in question and the centre of (the double-cover of) the Weyl group, we shall construct a family of operators depending on central elements.

\subsection{An analogue of Vogan's conjecture}
Denote by $Z\tilde W$ the centre of $\bbc \tilde W$. Denote also by $\ast$ the anti-linear involution of $\Cc$ defined in Section~\ref{s:PinCover} restricted to $\tilde W$ and extended anti-linearly to a star operation on $\bbc \tilde{W}$.

\begin{Definition}\label{d:hatW} Let $\tilde{W}$ be the double cover of $W$. If $w_0 \neq(-1)_\fh$, we define $\hat{W} = \tilde{W}$. Else, we define $\hat{W} = \tilde{W} \times C_2$, where, abusing notation, $(-1)_\fh$ generates $C_2$. In this case, we extend the homomophism $\rho$ of (\ref{ed:rhodef}) to $\hat{W} \to \ama \otimes \Cc$ by $\rho((-1)_\fh) = (-1)_\fh \otimes 1$. Furthermore, extend $*$ to $\hat{W}$ by $(-1)_\fh^*=(-1)_\fh^{-1} = (-1)_\fh$.
\end{Definition}

When $w_0 = (-1)_\fh$, the algebra $\bbc \hat{W}$ is a central extension of $\bbc \tilde{W}$. In this case, there is a~$2$-to-$1$ map from $Z\hat{W}$ to $Z\tilde{W}$.

\begin{Definition}\label{d:Dirac_C}
An element $C\in \bbc\hat W$ is called {\it admissible} if $C$ is central and $C^* = C$. For any admissible $C\in Z\hat{W}$, define
\begin{equation}\label{e:CDirac}
\mathfrak{D}_C := (\mathcal{D} - \phi) + \rho(C),
\end{equation}
where $\mathcal{D}$ is the AMA-Dirac element and $\phi = \tfrac{1}{2}(2Z + n -2)$.
\end{Definition}

\begin{Remark}
The set of admissible elements has the structure of a real vector space,
and it is not empty as $C=0$ is admissible.
In the next section we shall exhibit and study a more interesting admissible element.
\end{Remark}

\begin{Theorem}\label{t:Vogan}
Given an admissible $C \in \bbc\hat W$, there is an algebra homomorphism
\[
\zeta_C\colon \ Z(\ama) \to Z\hat W
\]
such that, for all $z \in Z(\ama)$ there exists $a\in \ama\otimes \mathcal{C}$ satisfying
\[
z\otimes 1 = \rho(\zeta_C(z)) + \mathfrak{D}_Ca + a\mathfrak{D}_C.
\]
\end{Theorem}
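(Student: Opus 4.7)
The plan is based on the fact, established in the preceding Theorem in Section~\ref{s:Prelims}, that $Z(\ama) = \mathcal{R}[\Omega_{\fs\fl(2)}]$ where $\mathcal{R} = \bbc$ or $\bbc[(-1)_\fh]$. Since $Z(\ama)$ is generated as an algebra by $\Omega_{\fs\fl(2)}$ (and $(-1)_\fh$ when $w_0 = (-1)_\fh$), defining an algebra homomorphism $\zeta_C$ reduces to specifying its value on these generators; one then verifies the decomposition $z \otimes 1 = \rho(\zeta_C(z)) + \mathfrak{D}_C a + a \mathfrak{D}_C$ on the generators and extends multiplicatively to all of $Z(\ama)$.

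For the generator $\Omega_{\fs\fl(2)}$, the key observation is that admissibility of $C$ together with the $\rho(\hat{W})$-invariance of $\mathcal{D}$ (established in Section~\ref{s:Cliff}) and the centrality of $\phi$ in $\bbc W$ imply that $\rho(C)$ commutes with $\mathfrak{D}_0 = \mathcal{D} - \phi$. Hence $\mathfrak{D}_C^2 = \mathfrak{D}_0^2 + 2 \mathfrak{D}_0 \rho(C) + \rho(C^2)$, and substituting $\mathfrak{D}_0^2 = \Omega_{\fs\fl(2)} + 1$ from Corollary~\ref{c:DiracSquare} together with $\mathfrak{D}_0 = \mathfrak{D}_C - \rho(C)$, one obtains the identity
\[
\Omega_{\fs\fl(2)} \otimes 1 \;=\; \rho(C^2 - 1) \;+\; \tfrac{1}{2}\bigl\{\mathfrak{D}_C,\, \mathfrak{D}_0 - \rho(C) \bigr\},
\]
where the anti-commutator is valid because $\mathfrak{D}_0 - \rho(C)$ commutes with $\mathfrak{D}_C$. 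We therefore set $\zeta_C(\Omega_{\fs\fl(2)}) := C^2 - 1$, which lies in $Z\hat{W}$ since $C$ is central. When $w_0 = (-1)_\fh$, Definition~\ref{d:hatW} gives $\rho((-1)_\fh) = (-1)_\fh \otimes 1$, so taking $\zeta_C((-1)_\fh) := (-1)_\fh$ with $a = 0$ realises the decomposition on that generator trivially.

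Extending $\zeta_C$ as an algebra homomorphism to all of $\mathcal{R}[\Omega_{\fs\fl(2)}]$, the remaining task is to verify that the decomposition lifts to arbitrary $z \in Z(\ama)$. The crucial closure property is that $\mathfrak{D}_C$ commutes both with $Z(\ama) \otimes 1$ (since $Z(\ama)$ commutes with each of $\mathcal{D}$, $\phi$, and $\rho(C)$) and with $\rho(\bbc\hat{W})$ (by admissibility of $C$); consequently the subspace $J := \{\mathfrak{D}_C a + a \mathfrak{D}_C : a \in \ama \otimes \Cc\}$ is preserved by left and right multiplication by both $Z(\ama) \otimes 1$ and $\rho(\bbc\hat{W})$. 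Writing $z_i \otimes 1 = \rho(\zeta_C(z_i)) + j_i$ with $j_i \in J$ and expanding $(z_1 z_2) \otimes 1$, the cross terms $\rho(\zeta_C(z_i)) j_j$ land in $J$ immediately; the remaining product $j_1 j_2$ is reduced to $J$ by a systematic use of these commutation relations together with an induction on the PBW-degree in $\ama \otimes \Cc$. The main obstacle is precisely this control of $j_1 j_2$, because $J$ is not itself a two-sided ideal and contains the ``nested'' expressions $\mathfrak{D}_C b \mathfrak{D}_C$ that must be re-expressed; handling this demands careful bookkeeping in the spirit of the super-derivation argument of~\cite{BCT12}, adapted to accommodate the fact that in our setting $\mathfrak{D}_C$ mixes Clifford parities via the $\rho(C)$ correction term.
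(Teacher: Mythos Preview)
Your base case is exactly the paper's: you correctly derive $\Omega_{\fs\fl(2)}\otimes 1 = \rho(C^2-1) + \{\mathfrak{D}_C,a_1\}$ with $a_1 = \tfrac12\mathfrak{D}_C - \rho(C)$, and you even observe that $a_1$ commutes with $\mathfrak{D}_C$. The treatment of $(-1)_\fh$ also matches.

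The gap is in your multiplicative extension. You reduce the inductive step to showing that $j_1 j_2 \in J$ for $j_1,j_2\in J=\{\mathfrak{D}_C a + a\mathfrak{D}_C\}$, acknowledge that $J$ is not an ideal, and then defer the ``nested'' terms $\mathfrak{D}_C b\mathfrak{D}_C$ to an unspecified PBW-degree induction and a super-derivation argument \`a la \cite{BCT12}. Neither of these is actually carried out, and the latter is problematic for the reason you yourself note: $\mathfrak{D}_C$ is not homogeneous in the Clifford $\bbz_2$-grading because of the $\rho(C)$ term, so the odd-derivation mechanism that drives the \cite{BCT12} argument does not transplant directly. As written, the closure of $j_1 j_2$ is asserted, not proved.

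The paper sidesteps all of this by exploiting the very commutation you noticed for $a_1$. Rather than working in the large set $J$, it works in the subset $J'=\{\{\mathfrak{D}_C,a\}:[a,\mathfrak{D}_C]=0\}=\{2\mathfrak{D}_C a:[a,\mathfrak{D}_C]=0\}$, which \emph{is} closed under products: if $j_i=2\mathfrak{D}_C a_i$ with $[a_i,\mathfrak{D}_C]=0$, then $j_1 j_2 = 4\mathfrak{D}_C^2 a_1 a_2 = \{\mathfrak{D}_C,\,2\mathfrak{D}_C a_1 a_2\}\in J'$. Since $\gamma:=\rho(C^2)-1$ also commutes with $\mathfrak{D}_C$, one gets by a one-line induction
\[
\Omega^{m+1} = (\gamma^m + 2\mathfrak{D}_C a_m)(\gamma + 2\mathfrak{D}_C a_1)
= \gamma^{m+1} + \{\mathfrak{D}_C,\,a_m\gamma + a_1\gamma^m + 2\mathfrak{D}_C a_m a_1\},
\]
with the new $a_{m+1}$ again in the commutant. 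No filtration argument and no appeal to \cite{BCT12} are needed; the whole proof is five lines once you stay inside the commutant of $\mathfrak{D}_C$.
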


\begin{proof}
In this proof, we abbreviate $\Omega = \Omega_{\fs\fl(2)}$.
Because $Z(\ama) = \mathcal{R}[\Omega]$ has a very simple algebraic structure, we can give a straightforward proof, without having to use the more sophisticated ideas from \cite{HP02}. Let $\gamma := \rho\big(C^2\big) - 1\in \rho\big(Z\hat{W}\big)$. We show, by induction, that for every $m\in\mathbb{Z}_{\geq 1}$, there is $a_m \in \ama \otimes \Cc$ such that
\[
\Omega^m\otimes 1 = \gamma^m + \{\mathfrak{D}_C,a_m\}.
\]
Indeed, since $C\in \bbc\tilde W$ we have that $\rho(C)$ commutes with $\mathfrak{D}_0$ and $\mathfrak{D}_C$. Thus
\begin{align*}
 \mathfrak{D}_C^2 &= \mathfrak{D}_0^2 + \rho(C)^2 + 2\mathfrak{D}_0\rho(C)\\
 &= \Omega + 1 + \rho(C)(\rho(C) + 2\mathfrak{D}_0)\\
 &= \Omega + 1 - \rho(C)^2 + 2\rho(C)\mathfrak{D}_C,
\end{align*}
from which we conclude that upon defining $a_1 := \tfrac{1}{2}\mathfrak{D}_C-\rho(C)$, we have
\[
\Omega\otimes 1 = \Omega = \gamma + \{\mathfrak{D}_C,a_1\}.
\]
Note that $a_1$ commutes with $\mathfrak{D}_C$.
Now assume we have $\Omega^m = \gamma^m + \{\mathfrak{D}_C,a_m\}$ for some $a_m\in \ama \otimes \Cc$ that commutes with $\mathfrak{D}_C$. It is then straightforward to compute that
\begin{align*}
 \Omega^{m+1} &= (\gamma^m + \{\mathfrak{D}_C,a_m\})(\gamma + \{\mathfrak{D}_C,a_1\})\\
 &=\gamma^{m+1} + \{\mathfrak{D}_C,a_{m+1}\},
\end{align*}
with $a_{m+1} := a_m\gamma + a_1\gamma^m + 2\mathfrak{D}_Ca_ma_1$. Therefore, the homomorphism $\zeta_C$ is defined by $\zeta_C(\Omega) = \big(C^2 -1\big)$ and $\zeta_C((-1)_\fh) = (-1)_\fh$, if $w_0 = (-1)_\fh$.
\end{proof}

\begin{Remark}
In the proof of the previous theorem we only used that $C$ was in $\bbc\hat{W}$. The conditions on admissibility are needed below to ensure that the operators we obtain are self-adjoint.
\end{Remark}

\subsection{Unitary structures}
Let $\ast$ denote the anti-linear anti-involution of $\Cc$ defined in Section~\ref{s:PinCover}. Let also $\bullet$ be the restriction to $\ama$ of the anti-linear anti-involution of $\rca$ characterized on the generators by $x_i^\bullet = y_i$, $y_i^\bullet = x_i$ and $w^\bullet = w^{-1}$, for all $1\leq i \leq n$ and $w\in W$, where we recall that we have fixed orthonormal bases of $E$ and $E^*$. We then define an anti-linear anti-involution $\star$ on $\ama\otimes\Cc$ by taking the tensor product of these two anti-involutions. It is straightforward to check that $\rho(\tilde{w})^\star = \rho(\tilde{w}^*)$ for any $\tilde{w}\in\hat{W}$, where $\rho\colon \bbc\hat{W}\to\ama\otimes\Cc$ is the homomorphism of Definition~\ref{d:hatW}.

Now fix, once and for all, $(\sigma,S)$ an irreducible module for $\Cc$.
Endow $S$ with a unitary structure $(-,-)_S$, i.e., a complex inner product on $S$ that is also $\ast$-Hermitian
\[
(\sigma(\eta)s_1,s_2)_S = (s_1,\sigma(\eta^*)s_2)_S,
\]
for all $\eta\in\Cc$ and $s_1,s_2\in S$. For any $\bullet$-Hermitian module $(\pi,X)$ of $\ama$ we endow $X\otimes S$ with a~$\star$-Hermitian structure $(x\otimes s,x'\otimes s')_{X\otimes S} = (x,x')_X(s,s')_S$ for all $x,x'\in X$ and $s,s' \in S$. If the $\star$-Hermitian form on $X \otimes S$ is also positive definite, then we say $X \otimes S$ is unitary. We define operators in $\End(X\otimes S)$ by taking the image of the AMA-Dirac elements under $\pi\otimes\sigma$.

\begin{Proposition}
If $(\pi,X)$ is a $\bullet$-Hermitian $\ama$-module, then the operators $D = (\pi\otimes\sigma)(\mathcal{D})$ and $D_C = (\pi\otimes\sigma)(\mathfrak{D}_C)$, for admissible $C\in Z\hat{W}$, are self-adjoint. Furthermore, if $X\otimes S$ is unitary, then
\[
\big(D_C^2(x\otimes s),x\otimes s\big)_{X\otimes S} \geq 0
\]
for all $x \in X$ and all $s \in S$.
\end{Proposition}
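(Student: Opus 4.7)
The strategy is to reduce both self-adjointness assertions to showing that $\mathfrak{D}_C$ is fixed by the anti-linear anti-involution $\star$ on $\ama\otimes\Cc$, and then to invoke the assumption that $(\pi,X)$ is $\bullet$-Hermitian (together with the $*$-unitarity of $(\sigma,S)$) so that $\pi\otimes\sigma$ carries $\star$ to the operator-adjoint with respect to $(-,-)_{X\otimes S}$ on $\End(X\otimes S)$. The bound on $D_C^2$ will then be immediate from self-adjointness of $D_C$ and positivity of the form.

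The only nontrivial verification is $\mathcal{D}^\star = \mathcal{D}$. I would argue term by term on $\mathcal{D}=\sum_{i<j} M_{ij}\otimes c_ic_j$: the two obvious sign flips on the two tensor factors cancel. Explicitly,
\[
M_{ij}^\bullet = (x_iy_j - x_jy_i)^\bullet = y_j^\bullet x_i^\bullet - y_i^\bullet x_j^\bullet = x_jy_i - x_iy_j = -M_{ij},
\]
and since $c_i^* = \varepsilon(c_i^t) = -c_i$, one has $(c_ic_j)^* = c_j^*c_i^* = c_jc_i = -c_ic_j$ whenever $i<j$. Each summand $M_{ij}\otimes c_ic_j$ is therefore $\star$-invariant.

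For the remaining terms, $\phi = Z + (n-2)/2$ lies in $\bbc W$, and since $s_\alpha^\bullet = s_\alpha^{-1} = s_\alpha$ and the parameter function takes real values (which is what makes $\bullet$ well-defined on $\rca$), $\phi\otimes 1$ is $\star$-fixed. The admissibility hypothesis $C^*=C$, combined with the identity $\rho(\tilde w)^\star = \rho(\tilde w^*)$ for $\tilde w\in\hat W$ recorded just before the proposition, yields $\rho(C)^\star = \rho(C)$. Hence $\mathfrak{D}_C^\star = \mathfrak{D}_C$, so both $D = (\pi\otimes\sigma)(\mathcal{D})$ and $D_C = (\pi\otimes\sigma)(\mathfrak{D}_C)$ are self-adjoint on $X\otimes S$. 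When the $\star$-Hermitian form is positive definite,
\[
(D_C^2(x\otimes s), x\otimes s)_{X\otimes S} = (D_C(x\otimes s), D_C(x\otimes s))_{X\otimes S} \geq 0,
\]
as required. There is no real obstacle in this argument; the substantive point is the cancellation of the two minus signs coming from $M_{ij}^\bullet$ and $(c_ic_j)^*$, after which everything is routine bookkeeping with the tensor product of anti-involutions.
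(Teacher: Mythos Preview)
Your proof is correct and follows essentially the same approach as the paper's: verify $M_{ij}^\bullet = -M_{ij}$ and $(c_ic_j)^* = -c_ic_j$ so that $\mathcal{D}^\star = \mathcal{D}$, check $\phi^\bullet = \phi$, and use admissibility of $C$ together with $\rho(\tilde w)^\star = \rho(\tilde w^*)$ for the $\rho(C)$ term. You simply make explicit the computations the paper leaves as ``straightforward to check'', including the final inequality $(D_C^2\xi,\xi) = (D_C\xi,D_C\xi)\geq 0$.
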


\begin{proof}
It is straightforward to check that $M_{ij}^\bullet = -M_{ij}$ and $(c_ic_j)^* = -(c_ic_j)$, from which we get that the AMA-Dirac element is invariant for the $\star$-involution and thus $D$ is indeed Hermitian. Also, it is straightforward to check that $\phi^\bullet = \phi$ and the claims follow since $C$ is admissible.
\end{proof}

\begin{Example}\label{e:harmonicex}
Fix $\tau$ an irreducible representation of $W$ and let $M_c(\tau)$ be the standard module at $\tau$ for the rational Cherednik algebra $\rca$ with $\dim(E) \geq 2$. For real parameter functions $c$ close enough to $c=0$, it is known (see \cite{ES09}) that $M_c(\tau)$ is a unitary $\rca$-module. For such parameters, the modules $X_c(\tau)_m = \ker(\Delta_c)\cap M_c(\tau)_m$ are irreducible unitary $\ama$-modules (see \cite[Theorem~B]{CDM20}), where $\Delta_c$ is the Dunkl--Laplacian and $M_c(\tau)_m$ are the homogeneous elements of degree $m$ of $M_c(\tau)$. Let $\lambda(c,\tau,m)=m + \tfrac{n}{2} + N_c(\tau)$, where $N_c(\tau)$ is the scalar on which the central element $Z = \sum_{\alpha>0} c_\alpha s_\alpha$ acts on $\tau$. Then, the Casimir $\Omega = \Omega_{\fs\fl(2)}$ acts on $X_c(\tau)_m$by the scalar $\chi = \lambda(c,\tau,m)(\lambda(c,\tau,m) - 2)$. From the previous proposition, with $C=0$, we get that the parameter function $c$ for unitary $M_c(\tau)$ satisfy $\chi \geq -1$.
\end{Example}

\subsection{Dirac cohomology}

In the proof of Theorem \ref{t:Vogan}, we computed the square
\begin{equation}\label{e:C-Diracsquare}
 \mathfrak{D}_C^2 = \Omega_{\mathfrak{sl}(2)} - \big(\rho(C)^2 - 1\big) + 2\rho(C)\mathfrak{D}_C,
\end{equation}
for any admissible $C\in Z\hat W$. Thus, in the kernel of a Dirac operator $D_C = (\pi\otimes\sigma)(\mathfrak{D}_C) \in \End(X\otimes S)$, where $(\pi,X)$ is an $\ama$-module, we get the equation
\[
(\pi\otimes\sigma)(\Omega_{\fs\fl(2)}) = (\pi\otimes\sigma)\big(\rho(C)^2 -1\big).
\]

Suppose $w_0=(-1)_\fh$. Then $w_0$ is central in $\ama$. Hence $(-1)_\fh$ acts by a scalar on $X$ and $X \otimes S$. The element $(-1)_\fh$ squares to $1$ and therefore this scalar is $1$ or $-1$. Because $(-1)_\fh$ commutes with $\mathfrak{D}_C$, it acts on the kernel of a Dirac operator $D_C$ by the same scalar. We can relate the action of the whole centre $Z(\ama)$ with the isotypic component of irreducible $\bbc\big[\hat{W}\big]$-representations occuring in the kernel of $D_C$.

\begin{Definition}\label{d:DiracCoh}
Let $(\pi,X)$ be an $\ama$-module and $C$ be an admissible element in $Z\tilde W$. The {\it Dirac cohomology of $C$} is defined by
\[
H(X,C) = \frac{\ker(D_C)}{\ker(D_C)\cap \im(D_C)},
\]
where $D_C = (\pi\otimes\sigma)(\mathfrak{D}_C) \in \End(X\otimes S)$.
\end{Definition}

\begin{Proposition}\label{p:unitarycoh}
The Dirac cohomology of $C$ is a $\hat W$-module. Moreover, if $X$ is a $\bullet$-Hermitian $\ama$-module, then $H(X,C) = \ker(D_C)$.
\end{Proposition}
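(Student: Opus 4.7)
The plan is to reduce both assertions to a single observation, namely that $\mathfrak{D}_C$ is $\rho(\hat W)$-central in $\ama\otimes\Cc$, supplemented for the second assertion by the self-adjointness of $D_C$ established in the previous proposition.

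For the first assertion, I would verify the $\rho(\hat W)$-centrality of $\mathfrak{D}_C = (\mathcal{D}-\phi)+\rho(C)$ term by term. The $\rho(\tilde W)$-invariance of $\mathcal{D}$ was already proved; in the case $w_0 = (-1)_\fh$, the extra generator of $\hat W = \tilde W \times C_2$ acts trivially on $\wedge^2\fh$ (by Lemma \ref{r:schurorth}), hence commutes with every $M_{ij}$, and $\rho((-1)_\fh) = (-1)_\fh \otimes 1$ is central in $\ama \otimes \Cc$. The scalar shift $\phi = \tfrac{1}{2}(2Z+n-2)$ lies in the centre of $\bbc W$, since $Z$ is central by construction, so $\phi \otimes 1$ commutes with all of $\rho(\hat W)$. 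Finally, $\rho(C)$ commutes with $\rho(\hat W)$ because $C \in Z\hat W$ and $\rho$ is a homomorphism. Applying $\pi\otimes\sigma$, the operator $D_C$ commutes with the diagonal $\hat W$-action on $X\otimes S$, so both $\ker(D_C)$ and $\im(D_C)$ are $\hat W$-submodules, and the quotient $H(X,C)$ inherits a $\hat W$-module structure.

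For the second assertion, I would invoke the self-adjointness of $D_C$ with respect to the $\star$-Hermitian form on $X\otimes S$. The standard orthogonality argument then gives $\im(D_C) \subseteq \ker(D_C)^\perp$: for any $u \in X\otimes S$ and any $w \in \ker(D_C)$,
\[
(D_C u, w)_{X\otimes S} = (u, D_C w)_{X\otimes S} = 0.
\]
Consequently, any $v \in \ker(D_C) \cap \im(D_C)$ is self-orthogonal, and positive-definiteness of the induced form forces $v = 0$. Hence $\ker(D_C) \cap \im(D_C) = 0$ and $H(X,C) = \ker(D_C)$.

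The main obstacle is essentially a matter of interpretation rather than content: the final orthogonality step genuinely depends on positive-definiteness of the $\star$-Hermitian form on $X\otimes S$, not merely on its existence (indeed, a simple rank-two counterexample shows that self-adjointness alone does not force $\ker \cap \im = 0$ for an indefinite form). The statement therefore implicitly requires reading ``$\bullet$-Hermitian'' in the sense of an \emph{invariant positive-definite} form, i.e., the ``unitary'' structure introduced just before Example \ref{e:harmonicex}; once this is granted, the argument reduces to standard kernel--image orthogonality for self-adjoint operators on a finite-dimensional inner product space.
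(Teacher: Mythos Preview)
Your proposal is correct and follows exactly the approach of the paper's (one-line) proof, which simply invokes the $\rho(\hat W)$-invariance of $\mathfrak{D}_C$ and the self-adjointness of $D_C$; you have merely unpacked these two facts in detail. Your closing observation about positive-definiteness is well-taken: the paper's own terminology explicitly distinguishes ``$\bullet$-Hermitian'' from ``unitary,'' and the $\ker\cap\im=0$ step genuinely requires the latter; the paper glosses over this, though when the proposition is actually applied (in Proposition~\ref{p:nonzerocoh}) the hypothesis is indeed ``$\bullet$-unitary.''
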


\begin{proof}
Clear, as $\mathfrak{D}_C$ is $\rho(\hat{W})$-invariant and $D_C$ is self-adjoint when $X$ is $\bullet$-Hermitian.
\end{proof}

We finish this section by showing that the Dirac cohomology of $C$ {\it determines the central character} of an $\ama$-module. To make this statement precise, we need some definitions. First, we say that an $\ama$-module $(\pi,X)$ {\it has central character $\chi\colon Z(\ama)\to \bbc$} if the centre $z \in Z(\ama)$ acts by the scalar~$\chi(z)$ on~$X$.

\begin{Remark}
Every irreducible $\ama$-module has a central character. However, to the best of the authors' knowledge, the representation theory of $\ama$ is currently unknown and since $\ama$ is the deformation of the image of the universal enveloping algebra of the Lie algebra $\fs\fo(n)$ into a~smash-product of $W$ and a Weyl algebra, there might be non-irreducible $\ama$-modules with central character resembling Verma modules.
\end{Remark}

\begin{Definition}
Let $C\in Z\hat{W}$ be an admissible element and $\zeta_C\colon Z(\ama)\to Z\hat{W}$ be the homomorphism of Theorem~\ref{t:Vogan}. For any irreducible $\hat{W}$ representation $\hat{\tau}$, define the homomorphism $\chi_{\hat{\tau}}\colon Z(\ama)\to\bbc$ via
\[
\chi_{\hat{\tau}}(z) = \frac{1}{\dim\hat{\tau}}\Tr\big( \hat{\tau}(\zeta_C(z)) \big),
\]
for any $z$ in $Z(\ama)$.
\end{Definition}

\begin{Theorem}\label{t:CentChar}
Let $C\in Z\hat{W}$ be an admissible element, $\hat\tau$ be an irreducible $\hat{W}$ representation and $(\pi,X)$ be an $\ama$-module with central character $\chi$. Suppose that
\[
\Hom_{\hat{W}}\big(\hat{\tau},H(X,C)\big)\neq 0.
\]
Then, $\chi = \chi_{\hat{\tau}}$.
\end{Theorem}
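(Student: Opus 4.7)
The plan is to apply the analogue of Vogan's conjecture (Theorem~\ref{t:Vogan}) to a representative of $\hat\tau$ inside the Dirac cohomology $H(X,C)$, and then invoke Schur's lemma for the centre of $\bbc\hat W$ acting on the irreducible module $\hat\tau$.

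First I would pick a nonzero $v\in\ker(D_C)$ whose class $[v]\in H(X,C)$ lies in a copy of $\hat\tau$, which is possible by the hypothesis $\Hom_{\hat W}(\hat\tau,H(X,C))\neq 0$. Fix $z\in Z(\ama)$ and invoke Theorem~\ref{t:Vogan} to produce $a\in\ama\otimes\Cc$ with $z\otimes 1=\rho(\zeta_C(z))+\mathfrak{D}_C a+a\mathfrak{D}_C$. Evaluating on $v$, and using $D_C v=0$ together with the fact that $z$ acts on $X$ by the scalar $\chi(z)$, I would obtain
\[
\chi(z)\,v - \rho(\zeta_C(z))\,v \;=\; D_C(av),
\]
which already exhibits the left-hand side as an element of $\im(D_C)$. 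To see that it is also annihilated by $D_C$, and hence vanishes in $H(X,C)$, the key check is the commutation $[\rho(\zeta_C(z)),\mathfrak{D}_C]=0$. Since $\zeta_C(z)\in Z\hat W$, this reduces to showing that each of $\mathcal{D}$, $\phi\otimes 1$ and $\rho(C)$ commutes with $\rho(\hat W)$: the first is the $\rho(\hat W)$-invariance of $\mathcal{D}$ already established; the third is immediate from $C\in Z\hat W$; and the middle, with $\phi=\tfrac{1}{2}(2Z+n-2)$, follows from the centrality of $Z=\sum_{\alpha>0}c_\alpha s_\alpha$ in $\bbc W$ together with $\rho(\tilde w)=p(\tilde w)\otimes \tilde w$.

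Finally, because $\zeta_C(z)$ is central in $\bbc\hat W$ and $\hat\tau$ is irreducible, Schur's lemma yields $\hat\tau(\zeta_C(z))=\chi_{\hat\tau}(z)\cdot\Id$, with the scalar matching the trace-normalised definition of $\chi_{\hat\tau}$. Passing the displayed identity to $H(X,C)$ and restricting to the $\hat\tau$-isotypic component through $[v]$ gives $\chi(z)[v]=\chi_{\hat\tau}(z)[v]$; since $[v]\neq 0$ and $z$ is arbitrary, we conclude $\chi=\chi_{\hat\tau}$. The only delicate point, which I expect to be the main (though modest) obstacle, is the commutation $[\rho(\zeta_C(z)),\mathfrak{D}_C]=0$ needed so that Vogan's identity descends cleanly to the quotient; everything else is a direct application of Schur's lemma to the identity of Theorem~\ref{t:Vogan}.
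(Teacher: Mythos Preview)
Your argument is correct and follows the same template as the paper's proof: pick a representative in $\ker(D_C)$ whose class is nonzero in $H(X,C)$ and lies in a copy of $\hat\tau$, apply Theorem~\ref{t:Vogan}, and invoke Schur's lemma for the central element $\zeta_C(z)$. The paper organizes the final step slightly more economically. Rather than passing to the quotient $H(X,C)$, it chooses the representative $\xi$ already inside the $\hat\tau$-isotypic component of $\ker(D_C)\subseteq X\otimes S$ and not in $\im(D_C)$; then $\rho(\zeta_C(z))$ acts on $\xi$ by the scalar $\chi_{\hat\tau}(z)$ directly, so $(\chi(z)-\chi_{\hat\tau}(z))\xi = D_C(a\xi)$ lies in $\im(D_C)$, forcing the scalar to vanish. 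This sidesteps your ``delicate point'' entirely: you never need to check that the left-hand side lies in $\ker(D_C)$, nor to verify $[\rho(\zeta_C(z)),\mathfrak{D}_C]=0$, because Schur is applied at the level of $X\otimes S$ rather than in the quotient.

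That said, your commutation check is valid and is in any case implicit in the statement (used in Proposition~\ref{p:unitarycoh}) that $H(X,C)$ is a $\hat W$-module. The paper also remarks, immediately after the proof, that in fact $a$ commutes with $\mathfrak{D}_C$ in the construction of Theorem~\ref{t:Vogan}, so $\mathfrak{D}_Ca+a\mathfrak{D}_C=2a\mathfrak{D}_C$ and the argument simplifies further.
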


\begin{proof}
The proof is mutatis mutandis of the one in \cite[Theorem~4.5]{BCT12}, but we add the short proof here, for convenience. The assumption in the statement implies the existence of a non-zero element $\xi$ in the $\hat{\tau}$-isotypic component of $X\otimes S$ which is in~$\ker(D_C)$ but not in~$\im(D_C)$. For any $z\in Z(\ama)$, since both $z\otimes 1$ and $\rho(\zeta_C(z))$ act by a scalar on $\xi$, we get, using Theorem~\ref{t:Vogan} that
\begin{align*}
 (\pi\otimes \sigma)(z\otimes 1 - \rho(\zeta_C(z)))\xi & = (\pi\otimes \sigma)(\mathfrak{D}_Ca + a\mathfrak{D}_C)\xi\\
 &= (\pi\otimes \sigma)(\mathfrak{D}_Ca)\xi\\
 &= 0,
\end{align*}
since otherwise $\xi$ would be in the image of $D_C$, which it is not. The claim follows.
\end{proof}

\begin{Remark}
In the proof of Theorem \ref{t:Vogan} we actually proved that the element ``$a$'' commuted with $D_C$, so $D_Ca + aD_C = 2aD_C$. Thus, the last bit of the proof of the previous theorem can be simplified, in our context.
\end{Remark}

\section{Examples of non-trivial admissible elements}\label{s:AdmElem}

In this last section we explore the set of admissible elements. Throughout this section, we assume the parameter function $c$ is real valued. Let
\begin{equation}\label{e:evenadm}
C_2 := \tfrac{1}{4}\sum_{\alpha,\beta>0}c_\alpha c_\beta \tilde{s}_\alpha\tilde{s}_\beta \in \bbc\tilde W\subset \bbc\hat{W}.
\end{equation}

\begin{Proposition}
The element $C_2$ of \eqref{e:evenadm} is admissible.
\end{Proposition}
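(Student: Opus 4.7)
The plan is to recognise $C_2 = \tfrac{1}{4}\tilde{Z}^2$ with $\tilde{Z} := \sum_{\alpha>0} c_\alpha \tilde{s}_\alpha \in \bbc\tilde{W}$, and verify the two conditions defining admissibility separately.

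For the self-adjointness, I would first check that, as a vector in $\Cc$, $\tilde{s}_\alpha$ is odd with $\tilde{s}_\alpha^t = \tilde{s}_\alpha$, so $\tilde{s}_\alpha^\ast = \varepsilon(\tilde{s}_\alpha) = -\tilde{s}_\alpha = \theta\tilde{s}_\alpha$. Anti-multiplicativity then gives $(\tilde{s}_\alpha\tilde{s}_\beta)^\ast = \theta^2\tilde{s}_\beta\tilde{s}_\alpha = \tilde{s}_\beta\tilde{s}_\alpha$, and, since $c$ is real, swapping the dummy indices $\alpha \leftrightarrow \beta$ in the defining sum yields $C_2^\ast = C_2$.

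For centrality in $\bbc\hat{W}$, the cyclic factor possibly occurring in $\hat{W}$ is central by construction, so it suffices to verify that $\tilde{Z}^2$ commutes with $\tilde{s}_\gamma$ for every simple root $\gamma$. The key combinatorial input is the classical fact that such a simple reflection sends $\gamma$ to $-\gamma$ and permutes the remaining positive roots. Writing $\tilde{Z} = P + Q$ with $Q := c_\gamma\tilde{s}_\gamma$ and $P := \sum_{\alpha \in R_+ \setminus \{\gamma\}} c_\alpha\tilde{s}_\alpha$, the conjugation relation $\tilde{s}_\gamma\tilde{s}_\alpha\tilde{s}_\gamma^{-1} = \theta\tilde{s}_{s_\gamma\alpha}$ (with the convention $\tilde{s}_{-\rho}=\theta\tilde{s}_\rho$) and the $W$-invariance of $c$ give, after a short change of variables,
\[
\tilde{s}_\gamma\tilde{Z}\tilde{s}_\gamma^{-1} = \theta P + Q,\qquad \tilde{s}_\gamma\tilde{Z}^2\tilde{s}_\gamma^{-1} - \tilde{Z}^2 = (\theta-1)\{P,Q\}.
\]
This difference is trivially zero in the summand $\bbc\tilde{W}_+$. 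In the twisted summand $\bbc\tilde{W}_-$ one has $\theta - 1 = -2$ and the task reduces to showing $\{P,Q\} = c_\gamma\sum_{\alpha\neq\gamma}c_\alpha\{\tilde{s}_\alpha,\tilde{s}_\gamma\} = 0$. For $\alpha \perp \gamma$ the conjugation rule forces $\tilde{s}_\alpha$ and $\tilde{s}_\gamma$ to anticommute in $\bbc\tilde{W}_-$, so each such anticommutator vanishes. For $\alpha \not\perp \gamma$ the involution $\alpha \mapsto s_\gamma\alpha$ is fixed-point-free on $R_+\setminus\{\gamma\}$; the identity $\tilde{s}_{s_\gamma\alpha} = \theta\tilde{s}_\gamma\tilde{s}_\alpha\tilde{s}_\gamma$ gives $\{\tilde{s}_{s_\gamma\alpha},\tilde{s}_\gamma\} = \theta\{\tilde{s}_\alpha,\tilde{s}_\gamma\}$, and the two members of each orbit therefore contribute $c_\alpha(1+\theta)\{\tilde{s}_\alpha,\tilde{s}_\gamma\} = 0$ in $\bbc\tilde{W}_-$. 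Hence $\{P,Q\} = 0$ and $\tilde{Z}^2$ is central.

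The main obstacle is the bookkeeping of the $\theta$-signs coming from the Pin cover; the classical fact about simple reflections is exactly what pares the general $s_\gamma$-action on $R_+$ down to an elementary $\bbz_2$-parity argument, avoiding the need to control the full set of positive roots flipped by a non-simple reflection.
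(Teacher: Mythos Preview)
Your argument is correct and follows essentially the same route as the paper. Both recognise $C_2$ as the square of $\tilde{Z}$, dispatch $C_2^\ast=C_2$ via $\tilde{s}_\alpha^\ast=\theta\tilde{s}_\alpha$, reduce centrality to the twisted summand $\bbc\tilde{W}_-$, and use that a simple reflection $s_\gamma$ permutes $R_+\setminus\{\gamma\}$ to show the relevant anticommutator $\{P,Q\}$ (the paper's $\{\tau_\beta,\tilde\Gamma_\beta\}$) vanishes there. Your orthogonal/non-orthogonal case split is a minor organisational variant of the paper's single reindexing step, not a different idea.
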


\begin{proof}
Note that the element
$\tilde{Z} = \frac{1}{2} \sum_{\alpha >0} c_\alpha \tilde{s}_\alpha$ is such that $C_2 = \tilde{Z}^2$. Since $\tilde{s}_\alpha^* = \theta\tilde{s}_\alpha$ for any $\alpha\in R$, we get $\tilde{Z}^*=\theta\tilde{Z}$ and hence $C_2^* = \big(\tilde{Z}^2\big)^* = C_2$. Recall $\theta$ is the central involution in $\tilde{W}$ (see presentation~(\ref{e:PinPresentation})).
Equation (\ref{eq::twistedgroupalg}) shows that $\mathbb{C}\tilde{W}$ splits into $\mathbb{C}W \oplus \mathbb{C}\tilde{W}_-$ and $\tilde{Z}$ decomposes as
\[\tilde{Z} = \tfrac{1}{2}(1+\theta) \tilde{Z} + \tfrac{1}{2}(1-\theta) \tilde{Z},\]
where the element $ \frac{1}{2}(1
+\theta) \tilde{Z}$ is equal to the central element $Z = \frac{1}{2}\sum_{\alpha >0}c_\alpha s_\alpha$ of $ \mathbb{C}W$ and we denote $T:=\tfrac{1}{2}(1-\theta) \tilde{Z}$. Hence, we are left to prove that the element $T^2= \frac{1}{2}(1-\theta) \tilde{Z}^2$ is central in $\mathbb{C}\tilde{W}_-$. Let $\tau_\alpha = \frac{1}{2}(1-\theta)\tilde{s}_\alpha$ for any $\alpha \in R$. Presentation (\ref{e:PinPresentationmalpha}) shows that the simple `pseudo' reflections $\tau_\alpha, \alpha \in \Delta$ generate $\mathbb{C}\tilde{W}_-$. Hence, it is sufficient to prove that $T^2$ commutes with every simple pseudo reflection. To that end, note that we can express $T$ in terms of the pseudo reflections as
\[\tfrac{1}{2}(1-\theta) \tilde{Z} = T = \tfrac{1}{2}\sum_{\alpha >0} c_\alpha \tau_\alpha.
\]
Further, for $\beta \in \Delta$, write $R_+ = R_\beta \cup \Gamma_\beta$ where $R_\beta = R_+\cap s_\beta(R_-) = \{\beta\}$ and $\Gamma_\beta = R_+\cap s_\beta(R_+) = R_+ \setminus \{\beta\}$ is the complement. Write also

\begin{equation}\label{e:tildeZsums}
\tilde{\Gamma}_\beta = \tfrac{1}{2} \sum_{\alpha \in \Gamma_\beta} c_\alpha \tau_\alpha
\end{equation}
so that $T = \frac{1}{2}c_\beta \tau_\beta + \tilde{\Gamma}_\beta \in \mathbb{C} \tilde{W}_-$.
Since $\beta$ is simple then $s_\beta$ permutes the elements of $\Gamma_\beta$, so, in view of the presentation (\ref{e:PinPresentation}), and using the invariance of the parameter function~$c$, we conclude that, the anti-commutator $\big\{\tau_\beta,\tilde{\Gamma}_\beta\big\}= 0$ in $\mathbb{C}\tilde{W}_-$ and hence
\[
T^2 = \tfrac{1}{4}{c_\beta}^2\tau_\beta^2 + \tilde{\Gamma}_\beta^2 +\big\{\tfrac{1}{2}c_\beta\tau_\beta,\tilde{\Gamma}_\beta\big\}
= \tfrac{1}{4}{c_\beta}^2\tau_\beta^2 + \tilde{\Gamma}_\beta^2 - \big\{\tfrac{1}{2}c_\beta\tau_\beta,\tilde{\Gamma}_\beta\big\}
= s_\beta(T)^2.
\]
It then follows that $\tau_\beta \tilde{Z}^2 = s_\beta\big(\tilde{Z}\big)^2\tau_\beta = \tilde{Z}^2\tau_\beta$ in $\mathbb{C}\tilde{W}_-$, and we are done.
\end{proof}

\begin{Definition}
Define elements
\[T_i = \tfrac{1}{2}\sum_{\alpha \in R_+} c_\alpha \frac{\langle x_i,\alpha^\vee\rangle}{|\alpha^\vee|} s_\alpha \qquad \text{and}\qquad T_i^\bullet =\tfrac{1}{2} \sum_{\alpha \in R_+} c_\alpha \frac{\langle \alpha,y_i\rangle}{|\alpha|} s_\alpha \in \mathbb{C}W.\] \end{Definition}
Using equation (\ref{e:pairings}) then $\frac{\langle x_i,\alpha^\vee \rangle}{|\alpha^\vee|} = \frac{|\alpha^\vee|^2\langle \alpha,y_i\rangle}{2|\alpha^\vee|} = \frac{\langle \alpha,y_i\rangle}{|\alpha|}$. Hence $T_i^\bullet =T_i$. Furthermore,
\begin{align*}
\rho(\tilde{Z}) = \tfrac{1}{2}\sum_{\alpha >0} c_\alpha s_\alpha \otimes \tilde{s}_\alpha &= \tfrac{1}{2}\sum_{\alpha >0, i =1}^n c_\alpha s_\alpha \otimes \frac{B(y_i,\alpha^\vee)}{|\alpha^\vee|}c_i \\
&= \tfrac{1}{2}\sum_{\alpha >0, i =1}^n c_\alpha s_\alpha \otimes \frac{\langle x_i,\alpha^\vee\rangle }{|\alpha^\vee|}c_i = \sum_{i=1}^n T_i \otimes c_i.
\end{align*}

\begin{Proposition}
The element $C_2$ is such that
\[\rho(C_2) = \sum_{i <j} (T_iT_j^\bullet - T_jT_i^\bullet)\otimes c_ic_j + Z_3,
\]
where $Z_3 = \tfrac{1}{4}\sum_{\alpha ,\beta >0}c_\alpha |\alpha^\vee|^{-1} c_\beta |\beta|^{-1} \langle \beta,\alpha^\vee\rangle s_\alpha s_\beta$ is a central element in $\mathbb{C}W$.
\end{Proposition}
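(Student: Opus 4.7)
The plan is to exploit $C_2=\tilde{Z}^2$ together with the fact that $\rho\colon\bbc\tilde{W}\to\ama\otimes\Cc$ is an algebra homomorphism (it comes from a genuine group homomorphism $\tilde{W}\to(\ama\otimes\Cc)^\times$). Hence $\rho(C_2)=\rho(\tilde{Z})^2$, and the formula $\rho(\tilde{Z})=\sum_i T_i\otimes c_i$ already derived just above gives
\[
\rho(C_2)=\sum_{i,j}T_iT_j\otimes c_ic_j.
\]
I would then split this double sum by Clifford degree: terms with $i=j$ give $\sum_i T_i^2\otimes 1$, while for $i\neq j$, pairing $(i,j)$ with $(j,i)$ and using $c_jc_i=-c_ic_j$ yields $\sum_{i<j}(T_iT_j-T_jT_i)\otimes c_ic_j$. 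Invoking $T_j^\bullet=T_j$ (an immediate consequence of~\eqref{e:pairings}, as already noted), this is precisely the degree-two summand in the claim.

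For the scalar Clifford part, I verify $\sum_i T_i^2=Z_3$ by direct expansion:
\[
\sum_i T_i^2=\tfrac14\sum_{\alpha,\beta>0} c_\alpha c_\beta \frac{1}{|\alpha^\vee||\beta^\vee|}\Bigl(\sum_i\langle x_i,\alpha^\vee\rangle\langle x_i,\beta^\vee\rangle\Bigr)s_\alpha s_\beta.
\]
The inner sum equals $B(\alpha^\vee,\beta^\vee)$ by orthonormality of $\{y_i\}$. Using the normalisation $\alpha=2B(\alpha^\vee)/|\alpha^\vee|^2$ one has $B(\alpha^\vee,\beta^\vee)=\tfrac12|\beta^\vee|^2\langle\beta,\alpha^\vee\rangle$, and together with $|\beta||\beta^\vee|=2$ this converts the coefficient into $\frac{\langle\beta,\alpha^\vee\rangle}{|\alpha^\vee||\beta|}$, matching $Z_3$ exactly.

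It remains to see that $Z_3$ is central in $\bbc W$. Since $C_2\in Z(\bbc\tilde{W})$ by the previous proposition, $\rho(C_2)$ is centralised by $\rho(\tilde{W})$. Conjugation by $\rho(\tilde{w})=p(\tilde w)\otimes\tilde{w}$ preserves the $\mathbb{Z}$-grading on the Clifford factor, because each $\tilde{s}_\alpha$ sends each $c_i$ to a linear combination of $c_j$'s via the Pin action recalled in Section~\ref{s:PinCover}. Extracting the scalar Clifford component therefore commutes with $W$-conjugation, forcing $wZ_3w^{-1}=Z_3$ for all $w\in W$. The only delicate step is the coefficient matching in the middle paragraph, where a slip involving $|\alpha|$ versus $|\alpha^\vee|$ could easily occur; everything else is formal manipulation.
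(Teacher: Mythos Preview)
Your argument is correct and follows the same route as the paper: write $\rho(C_2)=\big(\sum_i T_i\otimes c_i\big)^2$, split the double sum using the Clifford relations into the commutator piece $\sum_{i<j}[T_i,T_j]\otimes c_ic_j$ and the diagonal piece $\sum_i T_i^2$, and identify the latter with $Z_3$. The only cosmetic difference is that the paper computes $T_i^2$ as $T_iT_i^\bullet$, so the pairing $\langle\beta,\alpha^\vee\rangle$ appears directly via $\sum_i\langle x_i,\alpha^\vee\rangle\langle\beta,y_i\rangle$, whereas you first obtain $B(\alpha^\vee,\beta^\vee)$ and then convert; both are one-line manipulations. Your explicit argument for the centrality of $Z_3$ (using that conjugation by $\rho(\tilde w)$ respects the Chevalley $\mathbb{Z}$-grading on $\Cc$, so the degree-zero component of the $\rho(\tilde W)$-invariant element $\rho(C_2)$ is itself invariant) is a point the paper's proof leaves implicit, and it is a welcome addition.
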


\begin{proof}
Note that $\rho(C_2) = \rho\big(\tilde{Z}^2\big) = \big(\sum_{i=1}^n T_i \otimes c_i\big)^2$. Using the super-commutation relations~(\ref{e:CliffRel}) of $c_i$, we obtain that
$\rho(C_2) = \sum_{i<j} [T_i,T_j] \otimes c_ic_j + \sum_{i=1}^n T_i^2$.
Now $T_i^2 = T_iT_i^\bullet = \tfrac{1}{4}\sum_{\alpha ,\beta >0}c_\alpha c_\beta \frac{ \langle x_i,\alpha^\vee\rangle}{|\alpha^\vee|}\frac{\langle \beta,y_i\rangle}{|\beta|} s_\alpha s_\beta$. Therefore,
\[
\sum_{i=1}^n T_i^2 =\tfrac{1}{4}\sum_{i=1}^n \sum_{\alpha ,\beta >0}\frac{c_\alpha}{|\alpha^\vee|} \frac{c_\beta}{|\beta|} \langle x_i,\alpha^\vee\rangle \langle \beta,y_i\rangle s_\alpha s_\beta
= \tfrac{1}{4}\sum_{\alpha ,\beta >0}\frac{c_\alpha}{|\alpha^\vee|} \frac{c_\beta}{|\beta|} \langle \beta,\alpha^\vee\rangle s_\alpha s_\beta ,\]
as required.
\end{proof}

\begin{Corollary}
Let $\mathfrak{D}_{C_2}$ be the Dirac operator as defined in Definition~{\rm \ref{d:Dirac_C}}. Then $\mathfrak{D}_{C_2}$ can be expressed as:
\[\mathfrak{D}_{C_2} = \sum_{i <j} (x_iy_j - x_jy_i) \otimes c_ic_j +\sum_{i<j}( T_iT_j^\bullet - T_jT_i^\bullet) \otimes c_ic_j + Z_3 - \phi.\]
\end{Corollary}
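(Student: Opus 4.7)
The plan is to simply assemble the three ingredients already in hand: the definition \eqref{e:CDirac} of $\mathfrak{D}_C$ for $C = C_2$, the definition of the AMA-Dirac element $\mathcal{D} = \sum_{i<j}M_{ij}\otimes c_ic_j$ with $M_{ij}=x_iy_j-x_jy_i$, and the explicit formula for $\rho(C_2)$ that was just established in the preceding proposition. Substituting these into
\[
\mathfrak{D}_{C_2} = (\mathcal{D}-\phi) + \rho(C_2)
\]
immediately produces the stated expression, so no real calculation is required here.

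More concretely, I would first replace $\mathcal{D}$ by $\sum_{i<j}(x_iy_j - x_jy_i)\otimes c_ic_j$, which accounts for the first summand in the corollary. Next I would insert $\rho(C_2) = \sum_{i<j}(T_iT_j^\bullet - T_jT_i^\bullet)\otimes c_ic_j + Z_3$, furnishing the second and third summands. Finally the $-\phi$ term is carried over verbatim from the definition of $\mathfrak{D}_{C_2}$.

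There is no real obstacle: the content has been done in the two preceding results (the identification of $\rho(C_2)$ and the admissibility of $C_2$). The only thing to remark on, if desired, is that the sum defining $\mathcal{D}$ ranges over $i<j$ (not all pairs) and the expression for $\rho(C_2)$ from the previous proposition is also packaged in terms of $i<j$, so the two off-diagonal Clifford sums combine cleanly without any reindexing. Thus the corollary follows by direct substitution into \eqref{e:CDirac}.
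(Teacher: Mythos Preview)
Your proposal is correct and matches the paper's approach exactly: the paper states this corollary without proof, since it follows immediately by substituting the definition $\mathfrak{D}_{C_2} = (\mathcal{D}-\phi)+\rho(C_2)$, the expression $\mathcal{D} = \sum_{i<j}M_{ij}\otimes c_ic_j$, and the formula for $\rho(C_2)$ from the preceding proposition.
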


\begin{Remark}
Alternatively, we could slightly modify the generators of $\ama$, writing $\tilde{M}_{ij} = M_{ij} + T_iT_j^\bullet - T_jT_i^\bullet$. Then we can write the Dirac operator as
\[\mathfrak{D}_{C_2} = \sum_{i <j} \tilde{M}_{ij} \otimes c_ic_j + Z_3 - \phi.\]
The expression for $\mathfrak{D}_{C_2}$ above reflects an equivalent definition for the Dirac operator of the degenerate affine Hecke algebra \cite{BCT12}. There
\[D_\mathbb{H} = \sum_{i=1}^n x_i \otimes c_i + \sum_{i=1}^n \mathcal{T}_i \otimes c_i,
\]
where the elements $x_i$ are the commuting generators in the Luzstig presentation of $\mathbb{H}$ and $\mathcal{T}_i =-\frac{1}{2} \sum_{\alpha >0} c_\alpha B^*(x_i,\alpha)s_\alpha$.

However, in the AMA-Dirac we must modify by the central element $Z_3 - \phi \in Z(W)$. This can be interpreted as an analogue of the modification Kostant makes (see~\cite{K03}) when defining the cubic Dirac operator.
\end{Remark}

We will finish by exploring the set of admissible sets in the case when $W=S_n$. Recall that the algebra $\bbc \tilde{W}$ is a $\bbz_2$-graded algebra, where the generators $\tilde{s}_\alpha$ are given odd degree and $\theta$ even degree. The idempotent $\frac{1}{2} (1-\theta)$ is central and $\bbz_2$-homogenous, and hence $\bbc \tilde{W}_- =\frac{1}{2}(1-\theta)\bbc \tilde{W}$ is also $\bbz_2$-graded. Recall also that the simple `pseudo' reflections $\tau_\alpha = \frac{1}{2} (1 - \theta) \tilde{s}_\alpha, \alpha \in \Delta$ generate $\mathbb{C}\tilde{W}_-$.
The decomposition of the group algebra $\bbc \tilde{W} = \bbr \tilde{W} \oplus {\rm i} \bbr \tilde{W}$ is preserved under the multiplication by the idempotent $\frac{1}{2} (1 - \theta)$. Therefore, $\bbc \tilde{W}_- = \bbr \tilde{W}_- \oplus {\rm i} \bbr \tilde{W}_-$.

\begin{Proposition}\label{p::admissible} Let $W$ be the symmetric group~$S_n$. The set of admissible elements in $\mathbb{R}\tilde{W}_-$ is the even centre of $\mathbb{R}\tilde{W}_-$. \end{Proposition}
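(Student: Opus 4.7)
The plan is to reduce the admissibility condition to a grading-parity statement by exploiting the identity $\ast = \varepsilon \circ t$, where $t$ denotes the product-reversing (Clifford) transpose extended anti-linearly. Since the authors have already established $\tau_\alpha^\ast = -\tau_\alpha$ for every simple pseudo-reflection, the anti-automorphism property of $\ast$ yields, for any homogeneous $x \in \bbr\tilde W_-$ of $\bbz_2$-degree $k$, the formula $x^\ast = (-1)^k x^t$. Thus admissibility of $x$ becomes equivalent to centrality together with $x^t = (-1)^{|x|} x$.

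Next, I would use the class-sum description of $Z(\bbr\tilde W_-)$. A basis for this centre, coming from the projection $\tfrac{1}{2}(1-\theta)$ applied to the class sums of $\bbc\tilde W$, is given by elements $u_\mathcal O := \tfrac{1}{2}(z_\mathcal O - z_{\theta\mathcal O})$, one for each pair of conjugacy classes $\{\mathcal O, \theta \mathcal O\}$ of $\tilde W$ with $\theta\mathcal O \neq \mathcal O$. A direct computation gives
\[ u_\mathcal O^t \;=\; \tfrac{1}{2}\bigl(z_{\mathcal O^{-1}} - z_{\theta\mathcal O^{-1}}\bigr), \]
which equals $u_\mathcal O$ when $\mathcal O^{-1} = \mathcal O$ and equals $-u_\mathcal O$ when $\mathcal O^{-1} = \theta\mathcal O$.

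The decisive $S_n$-specific step is to verify that, for every such basis pair $\{\mathcal O, \theta\mathcal O\}$, the first alternative occurs, i.e.\ $\mathcal O^{-1} = \mathcal O$ inside $\tilde S_n$. This amounts to producing, for every $\tilde g \in \tilde S_n$, an element $\tilde h \in \tilde S_n$ with $\tilde h \tilde g \tilde h^{-1} = \tilde g^{-1}$. The strategy is to start with a conjugator $h \in S_n$ realising $h g h^{-1} = g^{-1}$, which always exists in $S_n$, and then to adjust the lift $\tilde h$ in $\tilde S_n$ using the central element $\theta$ so as to cancel any stray $\theta$-twist in the Clifford-algebra calculation; a cycle-by-cycle analysis, using the explicit Clifford formula $\tilde s_\alpha = |\alpha^\vee|^{-1}\iota(\alpha^\vee)$, identifies the correct lift.

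Granting this final step, $u_\mathcal O^t = u_\mathcal O$ for every basis vector of $Z(\bbr\tilde W_-)$, hence $x^t = x$ for every central $x$. Combined with $x^\ast = (-1)^{|x|} x^t$, admissibility of $x$ is then equivalent to $|x|$ being even, yielding both inclusions simultaneously. The hardest part is the third paragraph: exhibiting or characterising the correct $\tilde S_n$-conjugator for each cycle type, especially for cycles whose lifts have doubled order, and ensuring that the construction avoids the $\theta$-ambiguity in the sign. No part of this argument transfers to general reflection groups, which is consistent with the statement being particular to $W=S_n$.
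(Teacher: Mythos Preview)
Your reduction in the first two paragraphs is correct and useful: admissibility of a homogeneous central $x\in\bbr\tilde W_-$ is equivalent to $x^t=(-1)^{|x|}x$, and the class-sum basis behaves under $t$ exactly as you describe. Note, however, that this reduction shows the proposition is \emph{equivalent} to the claim in your third paragraph that every split class satisfies $\mathcal O^{-1}=\mathcal O$; so that claim is not a lemma on the way to the result but the entire content.

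The genuine gap is in your proposed mechanism for proving that claim. Since $\theta$ is central and $\theta^2=1$, one has $(\theta\tilde h)\tilde g(\theta\tilde h)^{-1}=\tilde h\tilde g\tilde h^{-1}$ for every $\tilde h$; replacing a lift by its $\theta$-translate therefore \emph{cannot} cancel any stray $\theta$-twist in $\tilde h\tilde g\tilde h^{-1}$. If for a given $h\in S_n$ the lifted conjugation produces $\theta\tilde g^{-1}$ rather than $\tilde g^{-1}$, you must change $h$ itself, not its lift, and there is no a~priori reason a better $h$ exists. Establishing $\mathcal O^{-1}=\mathcal O$ for all split classes of $\tilde S_n$ is tantamount to proving that all spin characters of this particular double cover are real-valued; this is true but is a nontrivial fact requiring a genuine cycle-type analysis, and your sketch does not supply it.

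The paper takes a quite different and shorter route. For the inclusion $Z_0(\bbr\tilde W_-)\subseteq\mathcal A$ it invokes Kleshchev's description of the even centre as the set of symmetric polynomials in the squares $M_1^2,\dots,M_n^2$ of the (odd) Jucys--Murphy elements; since $M_i^*=-M_i$ one gets $(M_i^2)^*=M_i^2$ immediately, so every such symmetric polynomial is admissible. This bypasses the conjugacy-class question entirely. Your approach, if completed, would have the virtue of being self-contained and of explaining \emph{why} the $S_n$ case is special in group-theoretic terms, but as it stands the decisive step is asserted rather than proved.
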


\begin{proof} Let $\mathcal{A} =\mathcal{A}\big(\bbr\tilde{W}_-\big)$ be the set of admissible elements in $\bbr \tilde{W}_-$. If $C \in \mathcal{A}$ then $C$ is central and fixed by the involution $*$. Since $\tau_\alpha^* = -\tau_\alpha$ in $\bbr\tilde{W}_-$ then any odd central element is not admissible. Hence, $\mathcal{A}$ is contained in $Z_0\big(\bbr\tilde{W}_-\big)$, the even centre of $\bbr\tilde{W}_-$. Now, the even centre
\[Z_0(\bbr\tilde{W}_-)=\big\{\sigma\big(M_1^2,\dots,M_n^2\big)\mid \sigma \textup{ real symmetric polynomial}\big\},
\] where $M_i$ are the Jucys--Murphy elements (see \cite[Lemma~13.1.2, Remark~13.1.3]{K05} and references within). The elements $M_i$ are odd so $M_i^*= -M_i$ and $\big(M_i^2\big)^*=M_i^2\in Z_0\big(\bbr\tilde{W}_-\big)$. Therefore, any expression in the squares of the Jucys--Murphy elements is $*$-invariant and thus, $Z_0\big(\bbr\tilde{W}_-\big)$ is contained in~$\mathcal{A}$.
\end{proof}

\begin{Remark}\label{r:actionofC}
When $W=S_n$, we have shown that the set of admissible elements in $\bbr\tilde{W}_-$ is equal to the even centre $Z_0\big(\bbr\tilde{W}
_-\big)$.
From \cite{K05}, $Z_0\big(\bbr\tilde{W}_-\big)$ is equal to $\big\{\sigma\big(M_1^2,\dots,M_n^2\big)\mid \sigma \textup{ real symmetric polynomial}\big\}$. The element $\sigma\big(M_1^2,\dots,M_n^2\big)$ acts on an irreducible representation by evaluating $\sigma$ at specific real values \cite[Corollary~6.3]{C18}. In particular, we can conclude that for every $\bbc\tilde{W}$-module there is an admissible element that does not act by zero.
\end{Remark}

\begin{Proposition}\label{p:nonzerocoh}
Let $\ama$ be a Dunkl angular momentum algebra with Weyl group $W=S_n$, and let $(\pi,X)$ be a $\bullet$-unitary module for $\ama$, such that the spectrum of $\pi(\Omega_{\mathfrak{sl}(2)})$ is contained in $[-1, + \infty)$. Then, there exists an admissible $C \in \bbc \tilde{W}_-$ such that $H(X,C) \neq 0$.
\end{Proposition}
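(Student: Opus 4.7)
The plan is to use the simplest possible admissible central element of $\mathbb{C}\tilde{W}_-$, namely a real scalar multiple of its identity $e_-:=\tfrac{1}{2}(1-\theta)$, and to calibrate the scalar so that $D_C$ acquires a non-zero kernel on a pre-chosen eigenspace of $\pi(\Omega_{\mathfrak{sl}(2)})$. First, since $X\neq 0$ and $\Omega_{\mathfrak{sl}(2)}$ is central in $\ama$, I pick any $\lambda\in[-1,+\infty)$ in the point spectrum of $\pi(\Omega_{\mathfrak{sl}(2)})$, with non-zero eigenspace $X_\lambda\subset X$. Because $\Omega_{\mathfrak{sl}(2)}$ acts only on the $\ama$-factor, $X_\lambda\otimes S$ is exactly the $\lambda$-eigenspace of $\pi(\Omega_{\mathfrak{sl}(2)})\otimes 1$ on $X\otimes S$, in particular non-zero, and is preserved by $D_0 := (\pi\otimes\sigma)(\mathfrak{D}_0)$ and by $\rho(\mathbb{C}\hat W)$.

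Next, Corollary~\ref{c:DiracSquare} gives $D_0^2 = \lambda + 1\geq 0$ as a scalar on $X_\lambda\otimes S$. Since $X$ is $\bullet$-unitary, $D_0$ is self-adjoint on $X\otimes S$, hence diagonalizable, with spectrum contained in $\{+\sqrt{\lambda+1},\,-\sqrt{\lambda+1}\}$. As $X_\lambda\otimes S$ is non-zero, at least one of these eigenvalues, say $\epsilon\sqrt{\lambda+1}$ with $\epsilon\in\{\pm 1\}$ (either choice works when $\lambda=-1$), is attained on a non-zero subspace $V_\epsilon\subseteq X_\lambda\otimes S$. I then set
\[
C := -\epsilon\sqrt{\lambda+1}\,e_- \in \mathbb{C}\tilde W_-.
\]
Admissibility is immediate: $\theta$ is central in $\tilde W$ by \eqref{e:PinPresentation} and commutes with any adjoined $C_2$-factor of $\hat W$, so $e_-$ is central in $\mathbb{C}\hat W$; and $\theta^*=\theta$ together with the real coefficient yield $C^*=C$. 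Moreover, since $\theta = -1\in\mathcal{C}$, the homomorphism $\rho$ of \eqref{ed:rhodef} sends $\theta$ to $1_W\otimes(-1_\mathcal{C})$, which acts as $-1$ on $X\otimes S$; therefore $\rho(e_-)=1$ and $\rho(C)=-\epsilon\sqrt{\lambda+1}$ is a global scalar on $X\otimes S$. Consequently $D_C = D_0 - \epsilon\sqrt{\lambda+1}$ annihilates $V_\epsilon$, so $0\neq V_\epsilon\subseteq \ker D_C$, and Proposition~\ref{p:unitarycoh} (applicable since $X$ is $\bullet$-unitary) upgrades this to $H(X,C)=\ker D_C\neq 0$.

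The main technical obstacle is the diagonalization step producing the non-zero $V_\epsilon$: it requires interpreting ``spectrum'' as point spectrum, which is automatic in the natural setting where $X$ is a direct sum of irreducible unitary $\ama$-modules on each of which $\Omega_{\mathfrak{sl}(2)}$ acts by a scalar in $[-1,+\infty)$. I note that the above argument does not actually use the hypothesis $W=S_n$: the richer description of admissible elements via real symmetric polynomials in the Jucys--Murphy squares, obtained in Proposition~\ref{p::admissible} and Remark~\ref{r:actionofC}, would be needed only if one wanted $C$ to act by prescribed scalars on selected irreducible $\tilde W$-isotypic components of $X\otimes S$, rather than by a single global real scalar --- and that refinement is not needed for this bare non-vanishing statement.
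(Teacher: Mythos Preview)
Your proof is correct and takes a genuinely simpler route than the paper's. Both arguments ultimately rest on the same factorization: on an eigenspace where $\Omega_{\mathfrak{sl}(2)}$ acts by $\lambda\geq -1$, Corollary~\ref{c:DiracSquare} gives $D_0^2=\lambda+1$, hence $(D_0-\sqrt{\lambda+1})(D_0+\sqrt{\lambda+1})=0$, so for the right sign $\epsilon$ the operator $D_0-\epsilon\sqrt{\lambda+1}$ has non-trivial kernel; one then realizes this shifted operator as $D_C$ for some admissible $C$ with $\rho(C)$ acting by the scalar $-\epsilon\sqrt{\lambda+1}$. The paper produces such a $C$ by invoking Remark~\ref{r:actionofC} (the Jucys--Murphy description of $Z_0(\bbr\tilde{W}_-)$ for $W=S_n$) to find an admissible element acting nonzero, then rescales; you instead observe that the identity $e_-=\tfrac{1}{2}(1-\theta)$ of $\bbc\tilde{W}_-$ is already admissible and satisfies $\rho(e_-)=1$ globally, so a real multiple of $e_-$ does the job. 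Your choice is more elementary, bypasses the $S_n$-specific input entirely, and --- as you note --- shows the statement holds for arbitrary real reflection groups $W$. One cosmetic point: your phrase ``self-adjoint, hence diagonalizable'' is not literally valid in infinite dimensions; the clean justification is the polynomial identity $D_0^2=(\lambda+1)\,\mathrm{Id}$ on $X_\lambda\otimes S$, which yields the idempotents $P_\pm=\tfrac{1}{2}\bigl(1\pm D_0/\sqrt{\lambda+1}\bigr)$ when $\lambda>-1$ (and $D_0=0$ when $\lambda=-1$, by positivity).
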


\begin{proof}
Firstly, it follows from Proposition \ref{p:unitarycoh} that, with our assumptions, $H(X,C) = \ker D_C$, where $D_C = \pi\otimes\sigma(\fD_C)$ and $C$ is an admissible element.

Secondly, as operators on the space $X \otimes S$, the element $\Omega_{\mathfrak{sl}(2)}$ commutes with $\rho(C)$ for every admissible element $C$. Therefore, there exist simultaneous eigenspaces for $\Omega_{\mathfrak{sl}(2)}$ and $\{\rho(C) \mid C \text{ admissible}\}$. Let $U$ be a nonzero eigenspace where $\Omega_{\mathfrak{sl}(2)}$ acts by the scalar $u(\Omega_{\mathfrak{sl}(2)})$ and $\rho(C)$ act by $u(C)$ for $C$ admissible. By Remark \ref{r:actionofC}, there exists an admissible $C$ such that $u(C)\neq 0$. We will show that there is $\lambda \in \bbr\setminus \{0\}$ such that $H(X,C')\neq 0$ for $C' = \lambda C$.

To that end, note that upon changing $C$ to $\big(u(C)^{-1}\sqrt{u(\Omega_{\mathfrak{sl}(2)})+1}\big)C$, we may assume that, as operators on $U$, we have
\[\Omega_{\mathfrak{sl}(2)} = \rho(C)^2 -1.\]
Equation (\ref{e:C-Diracsquare}) states that $ \mathfrak{D}_{C}^2 = \Omega_{\mathfrak{sl}(2)} - \big(\rho(C)^2 - 1\big) + 2\rho(C)\mathfrak{D}_{C}$. Hence, on the eigenspace $U$ we have $ D_{C}^2 -2u(C)D_{C} = 0$ from which we conclude that
\[
\{0\}\neq U \subseteq \ker ((D_{C} -2u(C))\circ D_{C}).
\]
Using the fact that the composition of injective maps is injective, it is not possible that both $\ker D_{C}$ and $\ker (D_{C} -2u(C))$ are equal to
zero. Now using that
\[
\fD_{(-C)} = \fD_0 - \rho(C) = \fD_0 + \rho(C) -2\rho(C) = \fD_C - 2\rho(C),
\]
when restricted to $U$, we obtain $D_{(-C)} = D_{C} -2u(C)$. Therefore, $\ker(D_{C'}) \neq 0$ for some choice of $C' \in \{C,-C\}$.
\end{proof}

\begin{Example}
Let $X_c(\tau)_m \subset M_c(\tau)$ be the harmonic polynomials of degree $m$ as defined in Example~\ref{e:harmonicex}. Then the spectrum of $\Omega_{\mathfrak{sl}(2)}$ on $X_c(\tau)_m$ is contained in $[-1,+\infty)$ for every $m$. Therefore, when $W= S_n$ (with $n \geq 3$), for every $X_c(\tau)_m$ there exists an admissible $C$ such that $H(X_c(\tau)_m, C) \neq 0$.
\end{Example}

\subsection*{Acknowledgements}

This research was supported by Heilbronn Institute for Mathematical Research and the special research fund (BOF) from Ghent University [BOF20/PDO/058].
We would also like to thank Roy Oste for the many discussions while preparing this manuscript and the anonymous referees for their comments and corrections, which greatly improved the manuscript. In particular, we would like to thank them for inspiring us to add Proposition~\ref{p:nonzerocoh} which guarantees that the theory of Dirac operators for the AMA is not a vacuous theory.

\pdfbookmark[1]{References}{ref}
\LastPageEnding

\end{document}